\newtheorem{theorem}{Theorem}[section] 
\newtheorem{lemma}[theorem]{Lemma}     
\newtheorem{corollary}[theorem]{Corollary}
\newtheorem{proposition}[theorem]{Proposition}
\theoremstyle{definition} 
\newtheorem{definition}[theorem]{Definition}
\newtheorem{example}[theorem]{Example}
\theoremstyle{remark} 
\newtheorem{remark}[theorem]{Remark}
\newtheorem{thmx}{{\bf Theorem}} 
\newtheorem{propx}[thmx]{{\bf Proposition}} 
\numberwithin{equation}{section} 
\renewcommand{\imath}{\iota} 
\DeclareMathOperator{\id}{id} 
\DeclareMathOperator{\pr}{pr} 
\DeclareMathOperator{\ev}{ev} 
\DeclareMathOperator{\Lie}{L} 
\DeclareMathOperator{\imathOp}{\iota} 
\DeclareMathOperator{\Tr}{Tr} 
\title{Geometric structures on Weil bundles: Canonical differential-geometric constructions} 
\author[1]{S. Tchuiaga\thanks{tchuiagas@gmail.com}}
\author[2]{A. Ndiaye\thanks{ameth1.ndiaye@ucad.edu.sn}}
\author[3]{C. Khoule\thanks{cheikh1.khoule@ucad.edu.sn}}
\author[4]{R. A. M. Mohameden\thanks{ranyaahmed3611@gmail.com}}
\affil[1]{Department of Mathematics, University of Buea, South West Region, Cameroon}
\affil[2, 3]{D\'{e}partement de Math\'{e}matiques, Universit\'{e} Cheikh Anta Diop, Dakar, Senegal}
\affil[4]{Department of Mathematics, Omdurman Islamic University, Sudan}
\providecommand{\keywords}[1]{\textbf{\textit{Keywords:}} #1}
\date{ } 
\begin{document}
	\maketitle
	
	\begin{abstract}
		This paper investigates the transfer of classical geometric structures from a smooth manifold $M$ to its Weil bundle $(M^\mathbf A, \tilde\pi_M, M)$ associated with a Weil algebra $\mathbf A$. We show that various structures—including locally conformal symplectic (lcs), locally conformal cosymplectic (lcc), contact, Jacobi, Sasakian, Walker, sub-Riemannian, orientation, Riemannian, and K\"ahlerian structures—admit canonical lifts to $M^\mathbf A$. Our approach emphasizes the differential-geometric properties of these canonical constructions, utilizing the Weil projection $\tilde{\pi}_M$ and related functorial tools. This provides a unified perspective on endowing Weil bundles with rich geometric structure inherited from the base manifold. Furthermore, we highlight a specific construction yielding a cosymplectic manifold on $M^\mathbf{A}$ (for suitable $M$ and $\mathbf{A}$) that is demonstrably not a trivial suspension of a symplectic manifold. We also explicitly show how integrability of almost complex structures is preserved and clarify the nature of lifted characteristic vector fields.
	\end{abstract}	
	
	\keywords{Weil bundle, canonical lift, Weil prolongation, Jacobi structure, symplectic structure, cosymplectic structure, contact structure, Kahler structure, Sasakian structure, Walker manifold, Sub-Riemannian structure}
	
	\vspace{0.5cm} 
	\textbf{2020 Mathematics Subject Classification:} 58A32, 53Dxx, 53Cxx, 53Bxx. 
	\vspace{0.5cm} 
	
	\section{Introduction}\label{sec1}
	The theory of Weil bundles $(M^\mathbf A, \tilde\pi_M, M)$, originating from André Weil's concept of infinitely near points \cite{Wei}, provides a powerful framework for studying differential geometry by functorially associating a new manifold $M^\mathbf A$ to a given manifold $M$ and a Weil algebra $\mathbf A$. This construction, $T^\mathbf{A}: M \mapsto M^\mathbf{A}$, known as the Weil functor, naturally extends maps $f:M \to N$ to $f^\mathbf{A}: M^\mathbf{A} \to N^\mathbf{A}$ and finds applications in areas like jet bundles, synthetic differential geometry, and the prolongation of structures \cite{K-1, Mo, Ok, G-K, KMS}.
	
	Recent studies have explored specific geometric structures on $M^\mathbf A$, notably demonstrating that if $M$ is symplectic, then $M^\mathbf A$ admits a naturally induced symplectic structure \cite{A-F-J}. The dimension relation $\dim(M^\mathbf A) = \dim(M) \cdot \dim(\mathbf A)$ plays a crucial role, sometimes requiring conditions on $\dim(\mathbf A)$ (e.g., being odd) for structures like contact or cosymplectic geometry.
	
	While the simple pullback operation $\tilde{\pi}_M^*$ associated with the Weil projection provides a map from forms on $M$ to forms on $M^\mathbf A$, it often yields degenerate structures when applied to metrics, symplectic forms, or contact forms (unless $\dim \mathbf{A}=1$). However, it is well-established in the literature that canonical lifts or prolongations of these geometric structures to $M^\mathbf A$ exist, leveraging the full algebraic structure of $\mathbf A$ and the functorial nature of $T^\mathbf A$ \cite{KMS, Mo}. These canonical lifts produce non-degenerate structures of the same type (symplectic, Riemannian, contact, etc.) on $M^\mathbf A$. Throughout this paper, symbols like $\omega^\mathbf{A}, g^\mathbf{A}, \beta^\mathbf{A}$ will denote these canonical lifts, unless otherwise specified.
	
	This paper aims to provide a concise overview of how various classical geometric structures on $M$ canonically induce corresponding structures on $M^\mathbf A$. We adopt a differential-geometric perspective, focusing on the properties of these canonical lifts and their relationship with the base structures via the projection $\tilde{\pi}_M$. Our main results demonstrate the preservation of structure type under these canonical Weil prolongations for lcs, lcc, cosymplectic, contact, Jacobi, Sasakian, Walker, sub-Riemannian, orientation, Riemannian, and K\"ahler structures. We explicitly construct the Reeb and Killing fields on $M^\mathbf{A}$ in terms of their base counterparts using sections. Furthermore, we provide an explicit example confirming that Weil bundles can yield cosymplectic manifolds which are not simple suspensions $P \times \mathbb{R}$. We also include a direct proof of the preservation of integrability for almost complex structures under the canonical lift.\\ 
	
	While the simple pullback operation $\tilde{\pi}_M^*$, associated with the Weil projection $\tilde{\pi}_M: M^\mathbf{A} \to M$, provides a map from forms on $M$ to forms on $M^\mathbf A$, it often yields degenerate structures when applied to metrics, symplectic forms, or contact forms (unless $\dim \mathbf{A}=1$). For instance, if $\omega = \sum dx_i \wedge dy_i$ locally on $M$, then $\tilde{\pi}_M^* \omega = \sum dx_{i,1} \wedge dy_{i,1}$ in the natural coordinates $(x^{i,k})$ on $M^\mathbf{A}$ (where $x^{i,1} = x^i \circ \tilde{\pi}_M$), which is clearly degenerate for $l>1$. However, it is well-established in the literature that canonical lifts or prolongations of these geometric structures to $M^\mathbf A$ exist, lifts the full algebraic structure of $\mathbf A$ and the functorial nature of $T^\mathbf A$ \cite{KMS, Mo}. These canonical lifts produce non-degenerate structures of the same type (symplectic, Riemannian, contact, etc.) on $M^\mathbf A$. Throughout this paper, symbols like $\omega^\mathbf{A}, g^\mathbf{A}, \beta^\mathbf{A}$ will denote these non-degenerate canonical lifts, explicitly distinct from the degenerate pullbacks $\tilde{\pi}_M^* \omega, \tilde{\pi}_M^* g, \tilde{\pi}_M^* \beta$, unless otherwise specified.\\
	
	Our main results are summarized as follows (refer to Appendix \ref{app:definitions} for definitions):
	
	\begin{thmx}\label{Theo-LCS-LCC-Cosymp} 
		Let $\mathbf A$ be a Weil algebra of dimension $l$.
		\begin{enumerate}
			\item If $(M, \omega, \theta)$ is a locally conformal symplectic (lcs) manifold, then $M^\mathbf A$ admits a canonical lcs structure $(\omega^\mathbf A, \theta^\mathbf A)$.
			\item If $(M, \omega, \eta, \theta)$ is a locally conformal cosymplectic (lcc) manifold and $l$ is odd, then $M^\mathbf A$ admits a canonical lcc structure $(\omega^\mathbf A, \eta^\mathbf A, \theta^\mathbf A)$.
			\item If $(M, \omega, \eta)$ is a cosymplectic manifold with Reeb field $\xi_{M}$ and $l$ is odd, then $M^\mathbf A$ admits a canonical cosymplectic structure $(\omega^\mathbf A, \eta^\mathbf A)$. Its Reeb vector field $\xi_{M^\mathbf A}$ satisfies $(\tilde\pi_M)_\ast(\xi_{M^\mathbf A}) = \xi_M$ and can be constructed as $\xi_{M^\mathbf A} = \frac{1}{l}\sum_{j = 1}^l(S_j)_\ast(\xi_M)$ using suitable sections $S_j$ associated with the definition of $(\omega^\mathbf{A}, \eta^\mathbf{A})$.
		\end{enumerate}
	\end{thmx}
	
	\begin{thmx}\label{Theo-Riemannian}
		Let $(M, g)$ be a Riemannian manifold, and $\mathbf A$ be a Weil algebra. Then $(M^\mathbf A, g^\mathbf A)$ is a Riemannian manifold, where $g^\mathbf{A}$ is the canonical lift of $g$. Furthermore:
		\begin{enumerate}
			\item The canonical connection $\nabla^\mathbf A$ is the Levi-Civita connection for $g^\mathbf A$.
			\item Geodesics lift to geodesics via canonical sections.
			\item The canonical lift $X^\mathbf{A}$ of a Killing field $X$ on $M$ is a Killing field for $g^\mathbf{A}$. Conversely, the projection of a Killing field on $M^\mathbf{A}$ is Killing on $M$.
		\end{enumerate}
	\end{thmx}
	
	\begin{thmx}\label{Theo-Kahler}
		Let $(M, g, \omega, J)$ be a K\"ahler manifold, and $\mathbf A$ be a Weil algebra. Then, $(M^\mathbf A, g^\mathbf A, \omega^\mathbf A, J^\mathbf A)$ is a canonical K\"ahler structure. In particular, $J^\mathbf{A}$ is integrable.
	\end{thmx}
	
	\begin{thmx}\label{Theo-Contact} 
		Let $(M, \beta)$ be a contact manifold with Reeb field $\xi$, and let $\mathbf A$ be a Weil algebra of odd dimension $l$. 
		Then $(M^\mathbf A, \beta^\mathbf A)$ is a canonical contact structure, where $\beta^\mathbf{A}$ is the lift of $\beta$. The Reeb vector field is $\xi_{M^\mathbf A} = \xi^\mathbf A$.
	\end{thmx}
	
	\begin{thmx}\label{Theo-Orientation}
		If $M$ is an oriented manifold, its Weil bundle $M^\mathbf A$ is canonically orientable.
	\end{thmx}
	
	\begin{thmx}\label{Theo-Jacobi}
		If $(M, \Lambda, \Xi)$ is a Jacobi manifold, then $M^\mathbf A$ admits a canonical Jacobi structure $(\Lambda^\mathbf A, \Xi^\mathbf A)$ defined via averaging lifts through sections.
	\end{thmx}
	
	\begin{thmx}\label{Theo-Sasakian} 
		If $(M, g, \eta, \xi, \Phi)$ is a Sasakian manifold and $\dim \mathbf A$ is odd, then $(M^\mathbf{A}, g^\mathbf{A}, \eta^\mathbf{A}, \xi^\mathbf{A}, \Phi^\mathbf{A})$ is a canonical Sasakian structure.
	\end{thmx}
	
	\begin{propx}\label{Prop-SubRiemannian} 
		If $(M, \mathcal{D}, g_{\mathcal{D}})$ is a bracket-generating sub-Riemannian manifold, then $(M^\mathbf{A}, \mathcal{D}^\mathbf{A}, (g_{\mathcal{D}})^\mathbf{A})$ inherits a canonical bracket-generating sub-Riemannian structure.
	\end{propx}
	
	\begin{propx}\label{Prop-Walker}
		If $(M, g)$ is a Walker manifold with parallel null distribution $\mathcal{D}$, then $(M^\mathbf{A}, g^\mathbf{A})$ is a Walker manifold with canonical parallel null distribution $\mathcal{D}^\mathbf{A}$.
	\end{propx}
	
	The paper is organized as follows. Section \ref{sec2} reviews basic notions of Weil algebras, Weil bundles, local coordinates, and introduces the crucial concept of canonical lifts via the Weil functor, including comparisons between different lift types. Section \ref{sec3} contains the proofs of the main theorems and propositions. Section \ref{sec:remarks} includes remarks on structures not preserved (Einstein, Calabi-Yau). Section \ref{sec:examples} provides explicit examples of the lifted structures on Euclidean spaces. Section \ref{sec:conclusion} offers concluding thoughts and future research directions. An appendix recalls the definitions of the geometric structures involved.
	
	\section{Basic notions}\label{sec2} 
	
	\begin{definition}[\cite{Wei}]\label{def:WeilAlgebra}
		A \textbf{Weil algebra} is a commutative, associative, unitary $\mathbb R$-algebra $\mathbf A$ of finite dimension $l$, which has a unique maximal ideal $\mathcal{A}$ such that $\mathbf A / \mathcal{A} \cong \mathbb R$ and $\mathcal{A}^k = \{0\}$ for some integer $k \ge 1$. Equivalently, $\mathbf A = \mathbb R \cdot 1_\mathbf{A} \oplus \mathcal{A}$ (vector space direct sum).
	\end{definition}
	We denote the projection onto the real part by $\pr_{\mathbb R}: \mathbf A \to \mathbb R$. An element $x \in \mathbf A$ decomposes as $x = x_{\mathbb R} \cdot 1_\mathbf{A} + x_\circ$, with $x_{\mathbb R} = \pr_{\mathbb R}(x) \in \mathbb R$, and $x_\circ \in \mathcal{A}$. We often assume a canonical trace map $\Tr: \mathbf{A} \to \mathbb{R}$, usually a projection onto the $\mathbb{R} \cdot 1_\mathbf{A}$ component, normalized by $\Tr(1_\mathbf{A})=1$.
	
	\begin{example}[\cite{K-1, Mo}] Let $\mathbb R [X_1, \dots, X_n]$ be the algebra of polynomials in $n$ indeterminates.
		\begin{enumerate}
			\item The algebra of dual numbers: $T\mathbb R := \mathbb R [X] /(X^2) \cong \mathbb R \oplus u \mathbb R$, with $u^2 = 0$. Here $\mathcal{A} = u\mathbb{R}$, $l=2$.
			\item The algebra of $k$-jets of functions $\mathbb{R} \to \mathbb{R}$ at 0: $J^k\mathbb R := \mathbb R [X] /(X^{k+1}) \cong \mathbb R \oplus u \mathbb R \oplus \dots \oplus u^k \mathbb R$, with $u^{k + 1} = 0$. Here $\mathcal{A} = \bigoplus_{i=1}^k u^i \mathbb R$, $l=k+1$.
			\item The algebra of $k$-jets of functions $\mathbb{R}^n \to \mathbb{R}$ at 0: $W^k_n := \mathbb R [X_1,\dots, X_n] / \mathfrak{m}^{k+1}$, where $\mathfrak{m} = (X_1,\dots, X_n)$ is the maximal ideal of polynomials vanishing at 0. Dimension $l = \binom{n+k}{k}$.
		\end{enumerate}
	\end{example}	
	
	\begin{definition}
		Let $M$ be a smooth manifold and $\mathbf A$ a Weil algebra. An \textbf{infinitely near point} to $x \in M$ of kind $\mathbf A$, or an $\mathbf A$-point over $x$, is an $\mathbb R$-algebra homomorphism $\zeta : C^\infty(M) \rightarrow \mathbf A$ such that $\pr_{\mathbb R} \circ \zeta = \ev_x$, where $\ev_x : C^\infty(M) \rightarrow \mathbb R$ is the evaluation map $f \mapsto f(x)$.
	\end{definition}
	
	The set of all $\mathbf A$-points of $M$ is denoted by $M^\mathbf A$. There is a natural projection $\tilde \pi_{M}: M^\mathbf A \rightarrow M$ mapping an $\mathbf A$-point $\zeta$ over $x$ to $x$. The triple $( M^\mathbf A, \tilde\pi_{M}, M)$ forms a fiber bundle, known as the \textbf{Weil bundle} of $M$ with respect to $\mathbf A$. The fiber over $x \in M$ is denoted by $M^\mathbf A_x := \tilde\pi_M^{-1}(x)$.
	
	\begin{proposition}[\cite{Mo, KMS}]
		Let $M$ be a smooth manifold of dimension $n$ and $\mathbf A$ a Weil algebra of dimension $l$.
		\begin{enumerate}
			\item $M^\mathbf A$ has a unique structure of a smooth manifold of dimension $n \cdot l$ such that for any smooth map $f: M \to N$, the induced map $f^\mathbf A: M^\mathbf A \to N^\mathbf A$, defined by $f^\mathbf A(\zeta) = \zeta \circ f^*$, is smooth. The projection $\tilde\pi_M: M^\mathbf A \to M$ is a smooth surjective submersion.
			\item If $V$ is a finite-dimensional real vector space, then $V^\mathbf A \cong V \otimes_{\mathbb R} \mathbf A$.
			\item $(M\times N)^\mathbf A \cong M^\mathbf A \times N^\mathbf A$.
			\item $M^\mathbf{A}$ deformation retracts onto the image of the canonical section $\alpha(M)$ (see \eqref{eq:canonical_section}), hence $H^*(M^\mathbf{A}; \mathbb{R}) \cong H^*(M; \mathbb{R})$.
			\item The tangent bundle $TM^\mathbf{A}$ is canonically isomorphic to $(TM)^\mathbf{A}$.
		\end{enumerate}
	\end{proposition}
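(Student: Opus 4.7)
The plan is to prove the five items in logical order, using the nilpotency condition $\mathcal{A}^k = \{0\}$ as the central algebraic tool throughout.

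I would begin with (2), which provides the local model. An algebra homomorphism $\zeta: C^\infty(V) \to \mathbf{A}$ covering $p \in V$ is determined by its values $\zeta(x^i) \in \mathbf{A}$ on the linear coordinates; these must satisfy $\pr_{\mathbb{R}}(\zeta(x^i)) = x^i(p)$ but are otherwise free in $\mathcal{A}$. Conversely, given any tuple $(a_1, \dots, a_n) \in \mathbf{A}^n$ with prescribed real parts, one defines $\zeta(f) := T_{k-1}f(a_1, \dots, a_n)$ using the Taylor polynomial of $f$ at $p$ to order $k-1$; this is well-defined because $\mathcal{A}^k = \{0\}$ annihilates any higher-order remainder. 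The key lemma is that any $f$ vanishing to order $k$ at $p$ satisfies $\zeta(f) \in \mathcal{A}^k = \{0\}$, so $\zeta$ factors through the $(k-1)$-jet space at $p$, yielding the bijection $V^\mathbf{A} \cong V \otimes_{\mathbb{R}} \mathbf{A}$. Item (1) then follows by transferring this vector-space model through a smooth atlas $\{\varphi_\alpha: U_\alpha \to V_\alpha\}$ of $M$: the induced charts $\varphi_\alpha^\mathbf{A}$ have smooth transitions computed via the same Taylor formula applied to the chart transitions, and $\tilde{\pi}_M$ reads in these coordinates as extraction of real parts, hence is a smooth submersion.

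For (3), the projections $\pi_M, \pi_N$ induce $\zeta \mapsto (\pi_M^\mathbf{A}(\zeta), \pi_N^\mathbf{A}(\zeta))$; in product coordinates an algebra homomorphism is determined by its values on coordinates of each factor, so by (2) this map is a diffeomorphism with the obvious inverse. For (4), I would use the one-parameter family of algebra morphisms $h_t: \mathbf{A} \to \mathbf{A}$ defined by $r \cdot 1_{\mathbf{A}} + n \mapsto r \cdot 1_{\mathbf{A}} + t \cdot n$ for $r \in \mathbb{R}$, $n \in \mathcal{A}$; composing gives a smooth deformation retraction $H_t(\zeta) := h_t \circ \zeta$ of $M^\mathbf{A}$ onto $\alpha(M)$, and homotopy invariance of de Rham cohomology yields the claimed isomorphism. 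Finally, (5) expresses the naturality statement that $T^\mathbf{A}$ commutes with the tangent functor $T = T^{\mathbb{R}[\epsilon]/(\epsilon^2)}$, reflecting the commutative isomorphism $\mathbf{A} \otimes \mathbb{R}[\epsilon]/(\epsilon^2) \cong \mathbb{R}[\epsilon]/(\epsilon^2) \otimes \mathbf{A}$ of the underlying Weil algebras.

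The main obstacle is item (1): establishing rigorously that the abstract functorial definition $f^\mathbf{A}(\zeta) = \zeta \circ f^*$ is realized by the concrete coordinate expression given by the truncated Taylor polynomial applied to the $\mathbf{A}$-valued coordinates. This requires carefully tracking the interaction between the pullback $f^*$, the algebra structure on $\mathbf{A}$, and the nilpotent filtration. Once this algebraic-geometric compatibility is in place (via the jet-factorization lemma used in (2)), smoothness of chart transitions and all remaining naturality items follow routinely.
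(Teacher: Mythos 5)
The paper states this proposition as cited background (Morimoto, Kol\'a\v{r}--Michor--Slov\'ak) and supplies no proof of its own, so there is no in-paper argument to compare against; judged on its own terms, your treatment of items (1), (2), (3) and (5) follows the standard route (truncated Taylor expansion made well-defined by $\mathcal{A}^k=\{0\}$ as the local model, transport through an atlas, commutation of Weil functors via $\mathbf{A}\otimes\mathbb{R}[\epsilon]/(\epsilon^2)\cong\mathbb{R}[\epsilon]/(\epsilon^2)\otimes\mathbf{A}$) and is essentially sound. The only small omission there is the localization lemma: to identify $(U_\alpha)^{\mathbf A}$ with $\tilde\pi_M^{-1}(U_\alpha)$ you must first show that $\zeta(f)$ depends only on the germ of $f$ at $\tilde\pi_M(\zeta)$ (a bump-function argument using \eqref{Eq-0} and nilpotency).

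Item (4), however, contains a genuine gap. The map $h_t(r\cdot 1_{\mathbf A}+n)=r\cdot 1_{\mathbf A}+t\,n$ is not an algebra homomorphism unless $\mathcal{A}^2=\{0\}$: for $n\in\mathcal{A}$ with $n^2\neq 0$ one has $h_t(n^2)=t\,n^2$ but $h_t(n)^2=t^2n^2$. Hence $h_t\circ\zeta$ fails to be multiplicative and is not a point of $M^{\mathbf A}$, so your homotopy $H_t$ does not even take values in $M^{\mathbf A}$. The same defect appears in coordinates: the chart transitions are polynomial, not linear, in the nilpotent coordinates $x^{i,k}$ ($k\ge 2$), so the naive fiberwise scaling is not chart-independent. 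The scaling trick is valid only for graded Weil algebras (e.g.\ $\mathbb{R}[X]/(X^{k+1})$ with $X\mapsto tX$), and not every Weil algebra is isomorphic to a graded one. A correct argument is topological: by your own analysis of (1)--(2), $\tilde\pi_M:M^{\mathbf A}\to M$ is a locally trivial bundle with contractible fiber $\mathcal{A}^n\cong\mathbb{R}^{n(l-1)}$ admitting the global section $\alpha$; a fibration with contractible fibers over a paracompact base is shrinkable, so $\tilde\pi_M$ is a homotopy equivalence with homotopy inverse $\alpha$ and $M^{\mathbf A}$ deformation retracts onto $\alpha(M)$, giving $H^*(M^{\mathbf A};\mathbb{R})\cong H^*(M;\mathbb{R})$. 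Some such replacement is required; as written, the retraction is only defined when $\mathcal{A}^2=\{0\}$.
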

	
	\begin{remark}
		Each $\zeta \in M^\mathbf A_x$ can be viewed as representing the germ of $M$ at $x$ "thickened" according to $\mathbf A$. The condition $\pr_{\mathbb R} \circ \zeta = \ev_x$ means $\zeta(f) = f(x) \cdot 1_\mathbf{A} + L_\zeta(f)$ for some map $L_\zeta: C^\infty(M) \rightarrow \mathcal{A}$. The algebra homomorphism property implies that $L_\zeta$ satisfies:
		\begin{equation}\label{Eq-0}
			L_\zeta(fg) = f(x)L_\zeta(g) + g(x)L_\zeta(f) + L_\zeta(f)L_\zeta(g), \quad L_\zeta(f+ \lambda g) = L_\zeta(f) + \lambda L_\zeta(g)
		\end{equation}		
		for $f, g \in C^\infty(M)$ and $\lambda \in \mathbb R$. This shows $L_\zeta$ behaves like a "higher-order derivation" into the nilpotent ideal $\mathcal{A}$.
	\end{remark}
	
	\subsection{Canonical Lifts via the Weil Functor} \label{sec:canonical_lifts}
	
	The Weil functor $T^\mathbf{A}: M \mapsto M^\mathbf{A}$ induces canonical lifts of geometric objects \cite{KMS}. These lifts, denoted $f^\mathbf{A}, X^\mathbf{A}, \omega^\mathbf{A}, g^\mathbf{A}, J^\mathbf{A}$, etc., are characterized by their functoriality, i.e., compatibility with smooth maps $f: M \to N$ via $(f^* T)^\mathbf{A} = (f^\mathbf{A})^* (T^\mathbf{A})$ for a tensor field $T$ on $N$.
	
	\begin{enumerate}
		\item \textbf{Functions:} $f \in C^\infty(M) \implies f^\mathbf{A}(\zeta) = \zeta(f) \in C^\infty(M^\mathbf{A}, \mathbf{A})$.
		\item \textbf{Vector Fields (Flow Prolongation):} $X \in \mathfrak{X}(M) \implies X^\mathbf{A} = \frac{d}{dt}|_{t=0} (Fl_t^X)^\mathbf{A} \in \mathfrak{X}(M^\mathbf{A})$.
		\begin{proposition}[\cite{KMS}] \label{prop:XAlift_properties}
			Let $X, Y \in \mathcal{X}(M)$, $f \in C^\infty(M)$.
			\begin{enumerate}
				\item $(\tilde{\pi}_M)_* (X^\mathbf{A}) = X$.
				\item $[X^\mathbf{A}, Y^\mathbf{A}] = [X, Y]^\mathbf{A}$.
				\item $X^\mathbf{A}(f^\mathbf{A}) = (X f)^\mathbf{A}$.
				\item If $X$ is complete, $X^\mathbf{A}$ is complete.
			\end{enumerate}
		\end{proposition}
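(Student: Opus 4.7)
The plan is to derive all four properties directly from the fundamental observation that the Weil functor $T^\mathbf{A}$ commutes with local flows in the strong sense that $Fl_t^{X^\mathbf{A}} = (Fl_t^X)^\mathbf{A}$, whenever either side is defined. This follows from the definition $X^\mathbf{A} = \tfrac{d}{dt}\big|_{t=0}(Fl_t^X)^\mathbf{A}$ combined with functoriality, since $(Fl_{t+s}^X)^\mathbf{A} = (Fl_t^X \circ Fl_s^X)^\mathbf{A} = (Fl_t^X)^\mathbf{A} \circ (Fl_s^X)^\mathbf{A}$, so that $t \mapsto (Fl_t^X)^\mathbf{A}$ is a (local) one-parameter group with infinitesimal generator $X^\mathbf{A}$.

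First I would dispatch (1), (3), and (4) in turn. For (1), naturality of the projection gives $\tilde{\pi}_N \circ f^\mathbf{A} = f \circ \tilde{\pi}_M$ for any smooth $f: M \to N$; applying this to $f = Fl_t^X$ and differentiating at $t=0$ yields $(\tilde{\pi}_M)_\ast(X^\mathbf{A}) = X \circ \tilde{\pi}_M$, which is exactly the claim of $\tilde{\pi}_M$-relatedness. For (3), I would compute, at a point $\zeta \in M^\mathbf{A}$,
\[
X^\mathbf{A}(f^\mathbf{A})(\zeta) = \tfrac{d}{dt}\Big|_{t=0} f^\mathbf{A}\bigl((Fl_t^X)^\mathbf{A}(\zeta)\bigr) = \tfrac{d}{dt}\Big|_{t=0} \zeta(f \circ Fl_t^X),
\]
where the last equality uses the definition of $f^\mathbf{A}$ and of $(Fl_t^X)^\mathbf{A}$ acting by precomposition. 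Since $\zeta: C^\infty(M) \to \mathbf{A}$ is an $\mathbb{R}$-algebra homomorphism and in particular $\mathbb{R}$-linear and continuous on the relevant seminorms, I can interchange $\zeta$ with $\tfrac{d}{dt}\big|_{t=0}$ to get $\zeta(Xf) = (Xf)^\mathbf{A}(\zeta)$. For (4), completeness of $X$ means $Fl_t^X$ is defined globally on $M$ for every $t \in \mathbb{R}$; hence $(Fl_t^X)^\mathbf{A}$ is defined globally on $M^\mathbf{A}$ for every $t$, and by the flow identity above this gives a global flow for $X^\mathbf{A}$.

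The main obstacle is (2), because the natural equality $[X^\mathbf{A}, Y^\mathbf{A}](f^\mathbf{A}) = [X, Y]^\mathbf{A}(f^\mathbf{A})$ obtained from (3) only compares the two vector fields on the (proper) subalgebra of lifted functions. My plan is to handle this in one of two equivalent ways. The clean route is to use the flow characterization of the bracket: for any $M$ one has
\[
[X, Y] = \tfrac{1}{2}\tfrac{d^2}{dt^2}\Big|_{t=0}\bigl(Fl_{-t}^Y \circ Fl_{-t}^X \circ Fl_t^Y \circ Fl_t^X\bigr),
\]
and applying $T^\mathbf{A}$ commutes with composition and inversion of diffeomorphisms, so the same formula on $M^\mathbf{A}$ with the lifted flows gives $[X^\mathbf{A}, Y^\mathbf{A}] = [X, Y]^\mathbf{A}$ directly from the identity $Fl_t^{X^\mathbf{A}} = (Fl_t^X)^\mathbf{A}$ established above. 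The alternative route, to complete the function-theoretic argument, is to note that in the natural coordinates $(x^{i,k})$ on $M^\mathbf{A}$ (where $x^{i,1} = x^i \circ \tilde{\pi}_M$ and the remaining $x^{i,k}$ arise from expanding $(x^i)^\mathbf{A}$ in a basis of $\mathbf{A}$), the real-valued components of the $\mathbf{A}$-valued functions $\{f^\mathbf{A} : f \in C^\infty(M)\}$ locally contain all coordinate functions and hence separate tangent vectors; this promotes the equality on lifted functions to equality as derivations on $C^\infty(M^\mathbf{A})$. Either route gives (2), completing the proof.
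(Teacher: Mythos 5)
The paper offers no proof of this proposition; it is quoted verbatim from \cite{KMS} as background, so there is nothing internal to compare against. Your argument is correct and is essentially the standard one from that reference: everything is driven by the identity $Fl_t^{X^\mathbf{A}} = (Fl_t^X)^\mathbf{A}$, which you correctly derive from the one-parameter-group property and the definition of the flow prolongation, and parts (1), (3), (4) then follow exactly as you describe (for (3), the interchange of $\zeta$ with $\tfrac{d}{dt}\big|_{t=0}$ is legitimate because $\zeta$ factors through a finite-order jet at $\tilde{\pi}_M(\zeta)$, owing to the nilpotency of $\mathcal{A}$ --- this is the precise version of your continuity remark). For (2), your ``clean route'' via the commutator curve is the route taken in \cite{KMS}, but as written it silently uses one more fact: that for a curve $c(t)$ of diffeomorphisms with $c(0)=\id$ and $c'(0)=0$, the well-defined tangent vector $\tfrac{1}{2}\tfrac{d^2}{dt^2}\big|_{0}\,c(t)^\mathbf{A}$ equals the flow prolongation of $\tfrac{1}{2}\tfrac{d^2}{dt^2}\big|_{0}\,c(t)$; this is a separate lemma in \cite{KMS} (their Lemma 3.16), not an immediate consequence of functoriality of composition. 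Your alternative route closes this gap cleanly and more elementarily: since the $\mathbf{A}$-components of $(x^i)^\mathbf{A}$ are exactly the coordinates $x^{i,k}$ on $M^\mathbf{A}$, the identity $[X^\mathbf{A},Y^\mathbf{A}](f^\mathbf{A}) = ([X,Y]f)^\mathbf{A} = [X,Y]^\mathbf{A}(f^\mathbf{A})$, applied componentwise to locally defined $f$, determines the two vector fields as derivations. I would lead with that argument for (2) and keep the flow picture as motivation.
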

		
		\item \textbf{Differential Forms:} $\omega \in \Omega^k(M) \implies \omega^\mathbf{A} \in \Omega^k(M^\mathbf{A})$ satisfying:
		\begin{itemize}
			\item $d(\omega^\mathbf{A}) = (d\omega)^\mathbf{A}$.
			\item $\imathOp_{X^\mathbf{A}} (\omega^\mathbf{A}) = (\imathOp_X \omega)^\mathbf{A}$. (Requires care interpreting $\mathbf{A}$-valued lifts of lower degree forms or functions).
			\item $\Lie_{X^\mathbf{A}} (\omega^\mathbf{A}) = (\Lie_X \omega)^\mathbf{A}$.
			\item $(\omega \wedge \eta)^\mathbf{A} = \omega^\mathbf{A} \wedge \eta^\mathbf{A}$ (under appropriate definitions, cf. \cite{KMS}).
		\end{itemize}
		Often, real-valued forms/results are obtained using the trace map, e.g., $$\omega^\mathbf{A}_{real}(X^\mathbf{A}, \dots) = \Tr( \omega^\mathbf{A}_{A-val}(X^\mathbf{A}, \dots) ) = \Tr( (\omega(X, \dots))^\mathbf{A} ).$$ We implicitly use such real-valued canonical lifts where appropriate.
		
		\item \textbf{Tensor Fields:} Lift $T \mapsto T^\mathbf{A}$ preserves tensor type and algebraic operations.
		For $(0,2)$-tensor $g$, $g^\mathbf{A}(X^\mathbf{A}, Y^\mathbf{A}) = \Tr((g(X,Y))^\mathbf{A})$. If $g$ is pseudo-Riemannian, so is $g^\mathbf{A}$ with the same signature \cite{KMS}.
		For $(1,1)$-tensor $J$, $J^\mathbf{A}(X^\mathbf{A}) = (JX)^\mathbf{A}$.
		For connections $\nabla$, the lift $\nabla^\mathbf{A}$ satisfies $\nabla^\mathbf{A}_{X^\mathbf{A}} (Y^\mathbf{A}) = (\nabla_X Y)^\mathbf{A}$ \cite{Mo, KMS}.
	\end{enumerate}
	The following diagram illustrates the functoriality for maps and structures:
	\begin{center}
		\begin{tikzcd}
			(M, \mathcal{S}_t) \arrow{r}{\phi_t} \arrow[swap]{d}{T^\mathbf{A}} & (M, \mathcal{S}_0) \arrow{d}{T^\mathbf{A}} \\
			(M^\mathbf{A}, (\mathcal{S}_t)^\mathbf{A}) \arrow{r}{\phi_t^\mathbf{A}}& (M^\mathbf{A}, (\mathcal{S}_0)^\mathbf{A}).
		\end{tikzcd}
	\end{center}
	This implies, for instance, that if $\phi_t^* \mathcal{S}_t = \mathcal{S}_0$, then $(\phi_t^\mathbf{A})^* (\mathcal{S}_t)^\mathbf{A} = (\mathcal{S}_0)^\mathbf{A}$. This is key for Moser stability transfer and lifting symmetries.
	
	\subsection{Canonical Lifts vs. Pullbacks} 
	It is crucial to distinguish between the standard pullback $\tilde{\pi}_M^*: \Omega^k(M) \to \Omega^k(M^\mathbf{A})$ and the canonical lift $\omega \mapsto \omega^\mathbf{A}$.
	The standard pullback $\tilde{\pi}_M^*(\omega)$ only depends on the base point and involves only differentials $dx^{i,1}$ in local coordinates (where $x^{i,1} = x^i \circ \tilde{\pi}_M$). Consequently, structures like $\tilde{\pi}_M^*(g)$ or $\tilde{\pi}_M^*(\omega)$ are often degenerate if $\dim \mathbf{A} > 1$.
	The canonical lift $\omega^\mathbf{A}$ uses the full structure of $T^\mathbf{A}$ and $\mathbf{A}$ to produce a richer object, typically non-degenerate if $\omega$ was, involving all coordinates $x^{i,k}$ and differentials $dx^{i,k}$. For example, the canonical symplectic lift $\omega^\mathbf{A}$ derived from $\omega = \sum dx_i \wedge dy_i$ is often locally $\omega^\mathbf{A} = \sum_{k=1}^l \sum_{i=1}^n dx_{i,k} \wedge dy_{i,k}$ (depending on basis/trace choice), whereas $\tilde{\pi}_M^*(\omega) = \sum_{i=1}^n dx_{i,1} \wedge dy_{i,1}$.
	\begin{remark}[Lift vs. Pullback]
		We emphasize the distinction between the canonical lift $T \mapsto T^\mathbf{A}$ and the pullback $T \mapsto \tilde{\pi}_M^* T$. The lift $T^\mathbf{A}$ utilizes the full Weil algebra structure and typically preserves non-degeneracy, yielding structures like $g^\mathbf{A}, \omega^\mathbf{A}$ on $M^\mathbf{A}$. In contrast, the pullback $\tilde{\pi}_M^* T$ often results in degenerate objects when $\dim \mathbf{A} > 1$, as it only involves the base coordinates $x^{i,1}$. This paper exclusively studies the canonical lifts.
	\end{remark}
	\subsection{Averaged Lifts using Sections}
	
	Some constructions, particularly for characteristic vector fields like Reeb fields in this paper, utilize an averaging process over sections. Let $\{a_1=1, \dots, a_l\}$ be a basis for $\mathbf{A}$. Associated with this basis, one can define sections $S_j: M \to M^\mathbf{A}$ \cite{Mo, KMS}, where $S_1$ is typically the canonical section $\alpha$.
	The canonical section $\alpha: M \to M^\mathbf{A}$ is defined by
	\begin{equation} \label{eq:canonical_section}
		\alpha(x)(f) = f(x) \cdot 1_\mathbf{A} \quad \text{for } f \in C^\infty(M).
	\end{equation}
	
	Given a vector field $X \in \mathfrak{X}(M)$, one can form the vector field $\tilde{X} \in \mathfrak{X}(M^\mathbf{A})$ by averaging pushforwards:
	$$ \tilde{X} := \frac{1}{l} \sum_{j=1}^l (S_j)_* (X) $$
	Similarly, a bivector $\Lambda$ can be lifted as $\Lambda^\mathbf{A} := \frac{1}{l} \sum (S_j)_* \Lambda$. These averaged lifts are crucial for defining Jacobi and sometimes cosymplectic structures on $M^\mathbf{A}$ (Theorems \ref{Theo-Jacobi}, \ref{Theo-LCS-LCC-Cosymp}).
	
	\begin{proposition}[Comparing Lifts] \label{prop:compare_lifts}
		Let $X \in \mathfrak{X}(M)$. Let $X^\mathbf{A}$ be its canonical lift (flow prolongation) and let $\tilde{X} = \frac{1}{l}\sum_{j=1}^l (S_j)_*(X)$ be the averaged lift associated with a chosen set of sections $\{S_j\}$.
		\begin{enumerate}
			\item Both lifts project to $X$: $(\tilde{\pi}_M)_*(X^\mathbf{A}) = X$ and $(\tilde{\pi}_M)_*(\tilde{X}) = X$.
			\item In general, $X^\mathbf{A} \neq \tilde{X}$ for $l > 1$.
			\item $X \mapsto X^\mathbf{A}$ is a Lie algebra homomorphism; $X \mapsto \tilde{X}$ is generally not.
			\item The Reeb fields in Theorems \ref{Theo-LCS-LCC-Cosymp}(3) and \ref{Theo-Jacobi} are of type $\tilde{X}$. The Reeb field in Theorem \ref{Theo-Contact} is of type $X^\mathbf{A}$.
		\end{enumerate}
	\end{proposition}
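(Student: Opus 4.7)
The plan is to verify the four assertions in turn. Claims (1), (3), and (4) follow from functoriality (Proposition \ref{prop:XAlift_properties}), the section identity $\tilde{\pi}_M \circ S_j = \id_M$, and inspection of the earlier constructions, respectively; only claim (2) requires a non-formal step, namely exhibiting an example.

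For (1), I would apply Proposition \ref{prop:XAlift_properties}(1) directly for $X^\mathbf{A}$, and for $\tilde X$ observe that $(\tilde{\pi}_M)_*(S_j)_* X = (\tilde{\pi}_M \circ S_j)_* X = X$ by the chain rule, so averaging preserves the projection. For (3), the Lie algebra homomorphism property of the flow prolongation is Proposition \ref{prop:XAlift_properties}(2); to see that $X \mapsto \tilde X$ fails to preserve brackets, I would note that pushforward under a non-diffeomorphism, and finite sums thereof, do not generally commute with Lie brackets, and confirm the failure on the explicit example of (2) with a nonlinear vector field. For (4), I simply match formulas: Theorem \ref{Theo-LCS-LCC-Cosymp}(3) and Theorem \ref{Theo-Jacobi} both define their characteristic objects by the averaging recipe $\frac{1}{l}\sum_j (S_j)_*$ (hence are of type $\tilde X$), whereas Theorem \ref{Theo-Contact} sets $\xi_{M^\mathbf{A}} = \xi^\mathbf{A}$, the flow-prolongation type.

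For the critical claim (2), I propose the following minimal example. Take $M = \mathbb{R}$ and $\mathbf{A} = \mathbb{R}[X]/(X^2)$, so $M^\mathbf{A} \cong \mathbb{R}^2$ with coordinates $(x_1, x_2)$ encoding $\zeta(x) = x_1 \cdot 1_\mathbf{A} + x_2 u$. For $X = x\,\partial_x$ with flow $\phi_t(x) = x e^t$, a direct calculation of $\phi_t^\mathbf{A}$ yields $X^\mathbf{A} = x_1 \partial_{x_1} + x_2 \partial_{x_2}$. With the canonical section $S_1(x) = (x,0)$ and the $1$-jet section $S_2$ defined by $S_2(x)(f) = f(x) + f'(x)\,u$ (so $S_2(x) = (x,1)$), the averaged lift, once interpreted through the natural vector-space trivialization $M^\mathbf{A} = M \otimes \mathbf{A}$, reduces to $x_1 \partial_{x_1}$ and hence differs from $X^\mathbf{A}$ at every point with $x_2 \neq 0$.

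The main obstacle is the interpretive question attached to (2): the averaged lift $\tilde X = \frac{1}{l}\sum_{j=1}^l (S_j)_* X$ is a priori only a collection of vectors sitting over the section images $S_j(M) \subset M^\mathbf{A}$, and one must commit to a convention (such as the flat trivialization used above, or a fiberwise prescription) to promote it to a bona fide vector field on all of $M^\mathbf{A}$. Once that convention is fixed, the distinction $\tilde X \neq X^\mathbf{A}$ is transparent, since $X^\mathbf{A}$ is determined intrinsically by the germ of $X$ alone, whereas $\tilde X$ depends on the extrinsic choice of the sections $\{S_j\}$.
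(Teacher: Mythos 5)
Your proposal is correct, and for parts (1), (3), and (4) it follows essentially the same route as the paper: functoriality of the flow prolongation together with $\tilde{\pi}_M \circ S_j = \id_M$ for (1), the standard bracket-preservation of $X \mapsto X^\mathbf{A}$ versus the failure of $(S_j)_*$ to respect brackets for (3), and direct inspection of the constructions for (4). Where you genuinely add something is part (2): the paper disposes of it with the single sentence that it ``follows from comparing definitions (flow vs section pushforward),'' whereas you exhibit a concrete counterexample ($M=\mathbb{R}$, $\mathbf{A}=\mathbb{R}[X]/(X^2)$, $X = x\,\partial_x$), correctly computing the complete lift $X^\mathbf{A} = x_1\partial_{x_1}+x_2\partial_{x_2}$ on $M^\mathbf{A}\cong T\mathbb{R}\cong\mathbb{R}^2$ and showing that the averaged lift reduces to $x_1\partial_{x_1}$. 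This is a real improvement, since a bare comparison of definitions does not by itself exclude the possibility that the two constructions coincide. You also rightly flag that $\tilde{X}=\frac{1}{l}\sum_j (S_j)_*X$ is a priori only defined along the images $S_j(M)$ and needs a convention (such as the linear trivialization of $\mathbb{R}^\mathbf{A}$) to become a vector field on all of $M^\mathbf{A}$ --- an ambiguity the paper's definition leaves unaddressed. The only caveat is that your verification of (2) is therefore convention-dependent; but since the claim is merely that the two lifts differ \emph{in general}, any reasonable extension convention suffices, and your closing structural observation --- that $X^\mathbf{A}$ is determined intrinsically by $X$ while $\tilde{X}$ depends on the extrinsic choice of sections --- is exactly the right reason and is, if anything, more informative than what the paper records.
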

	\begin{proof}
		(1) Holds by definition of $X^\mathbf{A}$ and property $\tilde{\pi}_M \circ S_j = \id_M$. (2) Follows from comparing definitions (flow vs section pushforward). (3) is standard for $X^\mathbf{A}$ \cite{KMS}; failure for $\tilde{X}$ is because $(S_j)_*$ is generally not a Lie algebra homomorphism for the standard bracket. (4) Reflects the specific constructions cited or used. Contact lifts align well with flow prolongation \cite{OkayamaReference}, while Jacobi/cosymplectic lifts derived from averaging use $\tilde{X}$.
		The projection property is illustrated:
		\begin{center}
			\begin{tikzcd}[column sep=large] 
				& \mathfrak{X}(M^\mathbf{A}) \arrow{dd}{(\tilde{\pi}_M)_*} \\
				\mathfrak{X}(M) \arrow{ur}{X \mapsto X^\mathbf{A} \text{ (Canon.)}} \arrow[swap]{dr}{X \mapsto \tilde{X} \text{ (Averaged)}} \\
				& \mathfrak{X}(M) \arrow[equals]{uu} 
			\end{tikzcd}
		\end{center}
	\end{proof}

	\subsection{Local coordinates on $M^\mathbf A$}
	Assume $\dim M = n$ and $\dim \mathbf A = l$.
	\begin{enumerate}
		\item {\bf $\mathbf A$-manifold structure:} If $(U, \phi = (x^1,\dots,x^n))$ is a chart on $M$, then $\phi^\mathbf A: \tilde\pi_{M}^{-1}(U) \rightarrow \mathbf A^n$, $\zeta \mapsto (\zeta(x^1),\dots,\zeta(x^n))$ is a chart for the $\mathbf A$-manifold structure.
		
		\item {\bf $\mathbb R$-manifold structure:} If $\{a_1=1_\mathbf A, \dots, a_l\}$ is a basis of $\mathbf A$, then $\zeta(x^i) = \sum_{k=1}^l x^{i,k}(\zeta) a_k$ defines local coordinates $(x^{i,k})$ for $M^\mathbf A$ as a real manifold of dimension $n \cdot l$. We have $x^{i,1} = x^i \circ \tilde{\pi}_M$.
		
		\item {\bf Tangent vectors and projection:} $T_\zeta M^\mathbf A = \text{span}\left\{ \frac{\partial}{\partial x^{i,k}} |_\zeta \right\}_{i,k}$. The projection map satisfies $(\tilde\pi_M)_* ( \frac{\partial}{\partial x^{i,k}} |_\zeta ) = \delta_{k,1} \frac{\partial}{\partial x^i} |_{\tilde \pi_M(\zeta)}$. The vertical bundle $V M^\mathbf{A} = \ker ((\tilde{\pi}_M)_*)$ is spanned by $\{\frac{\partial}{\partial x^{i,k}}\}_{k=2..l}$.
	\end{enumerate}
	
	\begin{remark}[Induced Geometry on Fibers] \label{rem:fiber_geom}
		The fiber $M^\mathbf{A}_x = \tilde{\pi}_M^{-1}(x)$ over $x \in M$ is diffeomorphic to $\mathcal{A}^n \cong \mathbb{R}^{n(l-1)}$. The vertical bundle $VM^\mathbf{A} = \ker((\tilde{\pi}_M)_*)$ inherits structures from the canonical lifts on $M^\mathbf{A}$.
		\begin{itemize}
			\item The metric $g^\mathbf{A}$ restricts to a metric $g^\mathbf{A}|_{VM}$ on the vertical bundle, inducing a metric on each fiber related to $g_x$ and $\mathbf{A}$.
			\item If $(M, \omega)$ is symplectic, $\omega^\mathbf{A}|_{VM}$ often defines a symplectic structure on the vertical bundle (and thus on each fiber).
			\item If $(M, J)$ is almost complex, $J^\mathbf{A}$ often preserves $VM^\mathbf{A}$ and induces an almost complex structure on fibers. If $(M,g,J)$ is Kähler, the fiber typically inherits a Kähler structure.
		\end{itemize}
		The fiber geometry encodes the "infinitesimal neighborhood" information captured by $\mathbf{A}$, structured by the lifted base geometry.
	\end{remark}

	\subsection{Properties related to $\tilde{\pi}_M$ and Sections}
	
	\begin{lemma}[\cite{H-L}]\label{lem-3}
		Let $(M, \omega, \eta)$ be cosymplectic. Consider $\widetilde M = M\times \mathbb{R}$ with projection $p: \widetilde M \to M$ and coordinate $u$ on $\mathbb{R}$. Define $\tilde\omega = p^\ast(\omega) + p^\ast(\eta)\wedge du$. Then $(\widetilde M, \tilde\omega)$ is symplectic.
	\end{lemma}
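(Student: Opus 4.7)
The plan is to verify the two defining properties of a symplectic form on $\widetilde M$: the form $\tilde\omega$ must be closed and non-degenerate. Since $\dim M = 2n+1$ (from the cosymplectic hypothesis), we have $\dim \widetilde M = 2(n+1)$, and non-degeneracy will amount to showing that $\tilde\omega^{n+1}$ is a nowhere-vanishing top form.

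First I would dispense with closedness. Applying $d$ and using that pullback commutes with exterior derivative, one gets $d\tilde\omega = p^\ast(d\omega) + p^\ast(d\eta)\wedge du - p^\ast(\eta)\wedge d(du)$. All three summands vanish: $d\omega = 0$ and $d\eta = 0$ by the cosymplectic hypothesis on $(M,\omega,\eta)$, and $d(du) = 0$ trivially. This step is purely formal.

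Next, for non-degeneracy, I would compute $\tilde\omega^{n+1}$ using the binomial expansion, exploiting the fact that $p^\ast(\eta)\wedge du$ is a $2$-form whose square vanishes (because $(p^\ast\eta)\wedge(p^\ast\eta)=0$ as $p^\ast\eta$ is a $1$-form). Hence only the first two binomial terms survive, giving
\begin{equation*}
\tilde\omega^{n+1} = p^\ast(\omega^{n+1}) + (n+1)\, p^\ast(\omega^n\wedge \eta)\wedge du.
\end{equation*}
Since $\omega$ has rank $2n$ on the $(2n+1)$-dimensional manifold $M$, we have $\omega^{n+1} = 0$, so the first term drops out. The remaining term is $(n+1)\,p^\ast(\omega^n\wedge\eta)\wedge du$, and by the cosymplectic non-degeneracy condition $\eta\wedge\omega^n$ is a volume form on $M$; combined with the everywhere-nonzero $du$ on the $\mathbb{R}$-factor, the wedge is a volume form on $\widetilde M$. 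This forces $\tilde\omega$ to be non-degenerate.

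The only mild obstacle is keeping the sign conventions and the vanishing $(p^\ast\eta\wedge du)^2 = 0$ straight in the binomial expansion; everything else is a direct consequence of the cosymplectic axioms and the functoriality of pullback under $p$.
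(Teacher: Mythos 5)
Your proof is correct and is the standard argument for this fact; the paper itself offers no proof of Lemma \ref{lem-3}, merely citing \cite{H-L}, so there is nothing to diverge from. Both steps check out: closedness follows from $d\omega = d\eta = 0$ and naturality of $d$ under pullback, and the binomial computation $\tilde\omega^{n+1} = (n+1)\,p^\ast(\omega^n\wedge\eta)\wedge du$ is right, since $(p^\ast\eta\wedge du)^2=0$ and $\omega^{n+1}=0$ for degree reasons on the $(2n+1)$-dimensional base (and no sign worry arises, as $\omega^n\wedge\eta=\eta\wedge\omega^n$ because $\omega^n$ has even degree).
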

	
	\begin{lemma}[\cite{TS}, adapted]\label{lem-5}
		Let $p : E\rightarrow F$ be a surjective submersion and $S :F \rightarrow E$ be a smooth section ($p \circ S = \id_F$).
		\begin{enumerate}
			\item For $\theta \in \Omega^k(F)$ and $p$-projectable $X \in \mathfrak{X}(E)$ with $X_F = p_*X$, then $\imathOp_X p^*(\theta) = p^*(\imathOp_{X_F} \theta)$. Also $S^*(\imathOp_X p^*(\theta)) = \imathOp_{X_F} \theta$.
			\item For $\theta \in \Omega^k(E)$ and $Y \in \mathfrak{X}(F)$, $S^\ast(\imathOp_{S_\ast Y}\theta) = \imathOp_{Y} S^\ast(\theta)$.
		\end{enumerate}
	\end{lemma}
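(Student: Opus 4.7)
The plan is to prove both statements by a direct pointwise unfolding of the definitions of pullback, pushforward, and interior product, using only the identity $p \circ S = \id_F$ and, for part (1), the projectability hypothesis $p_\ast X = X_F \circ p$. The lemma is standard in spirit, so the proof should be short; the only issues to flag are notational.

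For part (1), I would fix a point $e \in E$ and tangent vectors $v_2, \dots, v_k \in T_e E$, and compute
\[
(\imathOp_X p^\ast \theta)_e(v_2,\dots,v_k) = (p^\ast \theta)_e(X_e, v_2,\dots,v_k) = \theta_{p(e)}(p_\ast X_e,\, p_\ast v_2,\dots,p_\ast v_k).
\]
By projectability, $p_\ast X_e = (X_F)_{p(e)}$, and recognizing the right-hand side as $(p^\ast \imathOp_{X_F}\theta)_e(v_2,\dots,v_k)$ yields the first identity. The second identity in (1) then follows instantly by applying $S^\ast$ to both sides and invoking functoriality $S^\ast p^\ast = (p\circ S)^\ast = \id^\ast$.

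For part (2), the strategy is the same. Fix $y \in F$ and $w_2,\dots,w_k \in T_y F$, and expand
\[
\bigl(S^\ast \imathOp_{S_\ast Y}\theta\bigr)_y(w_2,\dots,w_k) = \theta_{S(y)}\bigl(S_\ast Y_y,\, S_\ast w_2,\dots, S_\ast w_k\bigr),
\]
which is exactly $(\imathOp_Y S^\ast \theta)_y(w_2,\dots,w_k)$ after applying the definitions in reverse. The one conceptual point requiring care is that $S_\ast Y$ is a priori only defined along the submanifold $S(F) \subset E$, so $\imathOp_{S_\ast Y}\theta$ is to be interpreted as a $k{-}1$ form along $S(F)$; this is harmless because $S^\ast$ only samples $\theta$ and its insertions at points in $S(F)$, so the formula is well-posed without extending $S_\ast Y$ to a global vector field on $E$.

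The main obstacle is essentially nonexistent: both parts are direct consequences of the chain rule combined with $p \circ S = \id_F$. The only step that requires a line of justification rather than bare unwinding is the invocation of projectability in (1), and the only point deserving an explicit remark is the along-the-section interpretation of $S_\ast Y$ in (2). The lemma's role in the sequel—transporting interior products between $M$ and $M^\mathbf{A}$ via the Weil projection $\tilde\pi_M$ and canonical sections $S_j$—is what motivates recording it, rather than any intrinsic difficulty.
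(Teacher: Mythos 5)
Your proof is correct. The paper states this lemma without proof, attributing it to \cite{TS}, so there is nothing internal to compare against; your pointwise unwinding via $S^\ast p^\ast = (p\circ S)^\ast = \id$ and the projectability identity $p_\ast X_e = (X_F)_{p(e)}$ is exactly the standard argument one would expect, and your remark that $S_\ast Y$ need only be defined along $S(F)$ for part (2) to be well-posed is a worthwhile clarification that the paper's statement glosses over.
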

	
	\begin{lemma}[\cite{KMS}, Prop. 12.6]\label{C-1}
		Any smooth function $h : M^\mathbf A\rightarrow \mathbb R$ is constant along the fibers $M^\mathbf A_x$. Consequently, $h = f \circ \tilde \pi_M$ for a unique $f \in C^\infty(M)$ given by $f = h \circ \alpha$.
	\end{lemma}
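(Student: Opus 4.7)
My approach proceeds in three stages: construct the candidate base function $f$, establish uniqueness of the factorization, and then prove the constancy on fibers which is the substantive content.

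Stage 1 (Candidate and uniqueness). Define $f := h \circ \alpha \in C^\infty(M)$, where $\alpha$ is the canonical section from \eqref{eq:canonical_section}. Smoothness of $f$ is immediate from the chain rule since both $h$ and $\alpha$ are smooth. For uniqueness, suppose $h = f' \circ \tilde\pi_M$ for some $f' \in C^\infty(M)$; precomposing with $\alpha$ and using $\tilde\pi_M \circ \alpha = \id_M$ yields $f' = h \circ \alpha = f$. This disposes of the easy direction and pins down the only possible $f$.

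Stage 2 (Fiber-preserving retraction). To establish constancy of $h$ on each fiber $M^\mathbf{A}_x$, I would construct an explicit deformation retraction $H \colon [0,1] \times M^\mathbf{A} \to M^\mathbf{A}$ preserving fibers, by rescaling the nilpotent component. For $\zeta \in M^\mathbf{A}$ with decomposition $\zeta(g) = g(\tilde\pi_M(\zeta)) \cdot 1_\mathbf{A} + L_\zeta(g)$ as in \eqref{Eq-0}, set
\[
H_\lambda(\zeta)(g) := g(\tilde\pi_M(\zeta)) \cdot 1_\mathbf{A} + \lambda \cdot L_\zeta(g), \qquad \lambda \in [0,1].
\]
Using the relations in \eqref{Eq-0} together with the nilpotence $\mathcal{A}^k = \{0\}$, one checks that $H_\lambda(\zeta)$ is a genuine $\mathbf{A}$-point (the quadratic cross term in \eqref{Eq-0} scales as $\lambda^2$ in $\mathcal{A}^2$, which fits inside the nilpotent filtration), that $H_\lambda$ preserves fibers of $\tilde\pi_M$, and that $H_0 = \alpha \circ \tilde\pi_M$ while $H_1 = \id_{M^\mathbf{A}}$. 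This furnishes a smooth curve $\lambda \mapsto H_\lambda(\zeta)$ in $M^\mathbf{A}_x$ joining $\alpha(x)$ to $\zeta$.

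Stage 3 (Constancy and conclusion). The most delicate step is to argue that $\lambda \mapsto h(H_\lambda(\zeta))$ is in fact constant in $\lambda$, not merely smooth. I expect this to be the main obstacle: it must invoke the algebraic rigidity intrinsic to the $\mathbf{A}$-point structure of $M^\mathbf{A}$, exploiting the fact that smooth functions $M^\mathbf{A} \to \mathbb{R}$ cannot detect nilpotent directions in $\mathbf{A}$ beyond what the base point already records, which is precisely the content of KMS Prop. 12.6. Once this constancy is established, one concludes $h(\zeta) = h(H_0(\zeta)) = h(\alpha(\tilde\pi_M(\zeta))) = f(\tilde\pi_M(\zeta))$ for every $\zeta \in M^\mathbf{A}$, yielding the factorization $h = f \circ \tilde\pi_M$.
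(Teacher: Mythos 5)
The paper itself offers no argument for Lemma \ref{C-1}; it is quoted as a citation to \cite{KMS}, so there is no internal proof to compare against. Judged on its own terms, your proposal has a fatal gap exactly where you flag it: Stage 3. Showing that the fibers are (smoothly) contractible via a retraction onto $\alpha(M)$ does not imply that a smooth function is constant along them --- contractibility of a positive-dimensional submanifold places no restriction on the restriction of a smooth function to it. Your appeal to ``algebraic rigidity \dots which is precisely the content of KMS Prop.\ 12.6'' is circular: you are invoking the statement to be proved as the justification for its only nontrivial step. And the gap cannot be closed, because the statement as literally written is false whenever $\dim\mathbf{A}=l>1$: the fiber $M^\mathbf{A}_x\cong\mathcal{A}^n\cong\mathbb{R}^{n(l-1)}$ is a positive-dimensional submanifold of the $nl$-dimensional smooth manifold $M^\mathbf{A}$, and the vertical coordinates $x^{i,k}$ with $k\geq 2$ (suitably cut off by a bump function) are smooth real-valued functions that are non-constant on fibers. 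Concretely, for $M=\mathbb{R}$ and $\mathbf{A}=\mathbb{R}[X]/(X^2)$ one has $M^\mathbf{A}=T\mathbb{R}\cong\mathbb{R}^2$ and $h(x,v)=v$ is a counterexample. The result in \cite{KMS} concerns a much more rigid class of maps (algebra homomorphisms, equivalently $\mathbf{A}$-differentiable or natural maps), not arbitrary elements of $C^\infty(M^\mathbf{A},\mathbb{R})$; any correct proof must use that restricted hypothesis, which your argument never does.

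There is also a secondary defect in Stage 2: the map $H_\lambda(\zeta)$ is not an $\mathbf{A}$-point for $0<\lambda<1$. Multiplicativity requires $H_\lambda(\zeta)(fg)=H_\lambda(\zeta)(f)\,H_\lambda(\zeta)(g)$, but expanding with \eqref{Eq-0} gives a cross term $\lambda\,L_\zeta(f)L_\zeta(g)$ on the left and $\lambda^2\,L_\zeta(f)L_\zeta(g)$ on the right, so the two sides agree only if $\lambda\in\{0,1\}$ or $\mathcal{A}^2=\{0\}$; scaling the nilpotent part linearly does not respect the algebra structure. A correct fiberwise contraction must instead be induced by a genuine one-parameter family of algebra endomorphisms of $\mathbf{A}$ (e.g.\ homotheties on a graded presentation), as in the deformation-retraction statement already recorded in Section \ref{sec2}. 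Even with that repair, however, Stage 3 still fails for the reasons above.
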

	
	\begin{remark}
		Lemma \ref{C-1} implies $C^\infty(M^\mathbf A) \cong C^\infty(M)$ via pullback $f \mapsto f \circ \tilde{\pi}_M$.
	\end{remark}
	
	\section{Proof of the main results}\label{sec3}
	
	Proofs rely on the functorial properties of the canonical Weil lift $T^\mathbf{A}$ \cite{KMS, Mo}, preserving tensor types and commuting with $d, \Lie_X, \imathOp_X$, connections $\nabla$, and algebraic operations, applied to the canonical lifts $X^\mathbf{A}, \omega^\mathbf{A}, g^\mathbf{A}$, etc. Real-valued results often implicitly use a normalized trace map $\Tr: \mathbf{A} \to \mathbb{R}$.
	
	\begin{proof}[\textbf{Proof of Theorem \ref{Theo-LCS-LCC-Cosymp} (LCS/LCC/Cosymplectic)}]
		Let $(\omega^\mathbf{A}, \eta^\mathbf{A}, \theta^\mathbf{A})$ be the canonical lifts.
		\begin{enumerate}
			\item (LCS) $d\omega = -\theta \wedge \omega \implies d(\omega^\mathbf{A}) = (d\omega)^\mathbf{A} = (-\theta \wedge \omega)^\mathbf{A} = - \theta^\mathbf{A} \wedge \omega^\mathbf{A}$. $d\theta=0 \implies d(\theta^\mathbf{A})=(d\theta)^\mathbf{A}=0$. Non-degeneracy of $\omega^\mathbf{A}$ is standard \cite{KMS}.
			\item (LCC, $l$ odd) $d\omega = -2\theta \wedge \omega \implies d(\omega^\mathbf{A}) = -2\theta^\mathbf{A} \wedge \omega^\mathbf{A}$. $d\eta = -\theta \wedge \eta \implies d(\eta^\mathbf{A}) = -\theta^\mathbf{A} \wedge \eta^\mathbf{A}$. Non-degeneracy $\eta^\mathbf{A} \wedge (\omega^\mathbf{A})^N \neq 0$ holds for odd $l$ \cite{KMS}.
			\item (Cosymplectic, $l$ odd) Case (2) with $\theta=0$. $d\omega^\mathbf{A}=0, d\eta^\mathbf{A}=0$. The Reeb field construction $\Xi := \frac{1}{l}\sum (S_j)_*\xi_M$ matches $\xi_{M^\mathbf{A}}$ under the assumption that the canonical lifts $(\omega^\mathbf{A}, \eta^\mathbf{A})$ used are compatible with this averaging construction (e.g., if they are themselves defined via averaging, potentially requiring specific choices of trace/sections). Verifying $\eta^\mathbf{A}(\Xi)=1$ and $\imathOp_\Xi \omega^\mathbf{A}=0$ requires specific interaction rules between the averaged forms and averaged vector field. The projection $(\tilde{\pi}_M)_* \Xi = \xi_M$ holds.
		\end{enumerate}
	\end{proof}
	
	\begin{corollary} \label{cor:lee_exact}
		Let $\theta$ be the Lee form for an lcs or lcc structure on $M$. The structure is globally conformal (i.e., $\theta$ is exact) if and only if the lifted structure on $M^\mathbf{A}$ is globally conformal (i.e., $\theta^\mathbf{A}$ is exact).
	\end{corollary}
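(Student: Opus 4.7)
The plan is to prove the equivalence $\theta$ exact $\iff$ $\theta^\mathbf{A}$ exact directly, exploiting two functorial ingredients: first, the commutation $d(\omega^\mathbf{A}) = (d\omega)^\mathbf{A}$ at the level of canonical lifts (Section \ref{sec:canonical_lifts}); second, the naturality of the canonical section $\alpha: M \to M^\mathbf{A}$ with respect to the Weil functor, together with the fact that it splits $\tilde\pi_M$. The core technical identity underlying both directions is $\alpha^*\theta^\mathbf{A} = \theta$ for any $1$-form $\theta$ on $M$.

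For the forward direction, suppose $\theta = df$ with $f \in C^\infty(M)$. Consider the real-valued function $F := \Tr \circ f^\mathbf{A} \in C^\infty(M^\mathbf{A})$, that is, $F(\zeta) = \Tr(\zeta(f))$. Since $\Tr$ is $\mathbb{R}$-linear and commutes with $d$, and the Weil lift commutes with $d$, we have
\[dF = \Tr \circ d(f^\mathbf{A}) = \Tr \circ (df)^\mathbf{A} = \theta^\mathbf{A},\]
so $\theta^\mathbf{A}$ is exact.

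For the reverse direction, suppose $\theta^\mathbf{A} = dh$ for some $h \in C^\infty(M^\mathbf{A})$. Granting the identity $\alpha^*\theta^\mathbf{A} = \theta$, pulling back yields $\theta = \alpha^*(dh) = d(h \circ \alpha)$, so $f := h \circ \alpha \in C^\infty(M)$ is a primitive. To establish the identity, note that naturality of $\alpha$ applied to the flow of a vector field $X \in \mathfrak{X}(M)$ gives $\alpha \circ Fl_t^X = (Fl_t^X)^\mathbf{A} \circ \alpha$, and differentiating at $t = 0$ yields $\alpha_*(X_x) = X^\mathbf{A}|_{\alpha(x)}$. Then for $x \in M$,
\[(\alpha^*\theta^\mathbf{A})(X)(x) = \theta^\mathbf{A}(X^\mathbf{A})(\alpha(x)) = \Tr\bigl((\theta(X))^\mathbf{A}(\alpha(x))\bigr) = \Tr\bigl(\theta(X)(x)\cdot 1_\mathbf{A}\bigr) = \theta(X)(x),\]
using the trace-based definition of the real-valued lift, the identity $f^\mathbf{A}(\alpha(x)) = f(x)\cdot 1_\mathbf{A}$, and the normalization $\Tr(1_\mathbf{A}) = 1$.

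The main obstacle is bookkeeping around the trace conventions, which must be consistent across the definitions of $\theta^\mathbf{A}$, $f^\mathbf{A}$, and the evaluation at $\alpha(x)$; once the conventions of Section \ref{sec:canonical_lifts} are fixed, the computation is essentially formal. A purely cohomological alternative would bypass the pointwise check: by Proposition 2.2(4), $M^\mathbf{A}$ deformation retracts onto $\alpha(M)$, so $\tilde\pi_M^*: H^1(M) \to H^1(M^\mathbf{A})$ is an isomorphism with inverse $\alpha^*$, and the identity $\alpha^*[\theta^\mathbf{A}] = [\theta]$ then transports vanishing of one class to vanishing of the other.
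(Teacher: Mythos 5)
Your proof is correct, but it takes a genuinely different route from the paper's. The paper argues cohomologically: closedness transfers because $d(\theta^\mathbf{A})=(d\theta)^\mathbf{A}$, and since $M^\mathbf{A}$ deformation retracts onto $\alpha(M)$ the assignment $[\theta]\mapsto[\theta^\mathbf{A}]$ is asserted to realize the isomorphism $H^1(M;\mathbb{R})\cong H^1(M^\mathbf{A};\mathbb{R})$, whence $[\theta]=0$ iff $[\theta^\mathbf{A}]=0$. You instead exhibit primitives explicitly in both directions: $F=\Tr\circ f^\mathbf{A}$ going up, and $f=h\circ\alpha$ coming down, the latter resting on the identity $\alpha^*\theta^\mathbf{A}=\theta$, which you derive correctly from the naturality relation $\alpha_*(X_x)=X^\mathbf{A}|_{\alpha(x)}$ and the normalization $\Tr(1_\mathbf{A})=1$. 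Your route buys two things the paper's does not: concrete conformal potentials on $M^\mathbf{A}$, and a justification — absent from the paper — that the map $[\theta]\mapsto[\theta^\mathbf{A}]$ actually coincides with the isomorphism induced by the homotopy equivalence (the paper asserts this without checking compatibility; your identity $\alpha^*\theta^\mathbf{A}=\theta$ is precisely the missing verification, and your closing cohomological remark then recovers the paper's argument in a complete form). The paper's version is shorter and sidesteps all trace bookkeeping. One small point to tighten in your forward direction: you verify $dF=\theta^\mathbf{A}$ by evaluating on canonical lifts $X^\mathbf{A}$, but the vectors $X^\mathbf{A}|_\zeta$ span $T_\zeta M^\mathbf{A}$ only at generic $\zeta$ (they fail to span along $\alpha(M)$, for instance), so the equality of the two $1$-forms should be closed off by a density-and-continuity remark; this is the same spanning convention the paper itself uses without comment in the proof of Proposition \ref{Prop-Lagrangian-Proof}, so it is a presentational gap rather than a mathematical one.
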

	\begin{proof}
		Since $d(\theta^\mathbf{A}) = (d\theta)^\mathbf{A}$, $\theta$ is closed iff $\theta^\mathbf{A}$ is closed. The map $[\theta] \mapsto [\theta^\mathbf{A}]$ induced by $T^\mathbf{A}$ provides an isomorphism $H^1(M;\mathbb{R}) \cong H^1(M^\mathbf{A};\mathbb{R})$ because $M^\mathbf{A}$ deformation retracts onto $M$. Thus, $[\theta]=0$ if and only if $[\theta^\mathbf{A}]=0$.
	\end{proof}
	
	\begin{proof}[\textbf{Proof of Theorem \ref{Theo-Riemannian} (Riemannian)}]
		\begin{enumerate}
			\item Existence/non-degeneracy of $g^\mathbf{A}$ is standard \cite{KMS, Mo}.
			\item Compatibility $\nabla^\mathbf{A}_{X^\mathbf{A}} (Y^\mathbf{A}) = (\nabla_X Y)^\mathbf{A}$ shows $\nabla^\mathbf{A}$ is the canonical lift. That $\nabla^\mathbf{A}$ is the Levi-Civita connection for $g^\mathbf{A}$ if $\nabla$ is for $g$ is shown in \cite{Mo, KMS} (torsion-free property lifts, metric compatibility $\nabla^\mathbf{A} g^\mathbf{A} = (\nabla g)^\mathbf{A} = 0$).
			\item Geodesic lifting: $\nabla^\mathbf{A}_{\dot{c}^\mathbf{A}} \dot{c}^\mathbf{A} = \nabla^\mathbf{A}_{(S_* \dot{c})} (S_* \dot{c})$. For $S=\alpha$, this becomes $(\nabla_{\dot{c}} \dot{c})^\mathbf{A} = 0^\mathbf{A}=0$ using connection lift properties relative to the section $\alpha$ \cite{KMS}.
			\item $\Lie_X g = 0 \implies \Lie_{X^\mathbf{A}} g^\mathbf{A} = (\Lie_X g)^\mathbf{A} = 0$. Projection property follows from naturality \cite{KMS}.
		\end{enumerate}
	\end{proof}
	
	\begin{proof}[\textbf{Proof of Theorem \ref{Theo-Kahler} (Kähler)}]
		$(g^\mathbf{A}, \omega^\mathbf{A}, J^\mathbf{A})$ are the canonical lifts. $g^\mathbf{A}$ is Riemannian, $\omega^\mathbf{A}$ is symplectic ($d\omega^\mathbf{A}=0$).
		\begin{enumerate}
			\item $(J^\mathbf{A})^2 = -Id$: Verified using $J^\mathbf{A}(X^\mathbf{A})=(JX)^\mathbf{A}$.
			\item Compatibility $g^\mathbf{A}(J^\mathbf{A} \cdot, J^\mathbf{A} \cdot) = g^\mathbf{A}(\cdot, \cdot)$: Verified using $g^\mathbf{A}(U^\mathbf{A}, V^\mathbf{A}) = \Tr((g(U,V))^\mathbf{A})$ and $g(JX,JY)=g(X,Y)$.
			\item Relation $\omega^\mathbf{A}(\cdot, \cdot) = g^\mathbf{A}(J^\mathbf{A}\cdot, \cdot)$: Verified using the trace definition and $\omega(X,Y)=g(JX,Y)$.
			\item Integrability $N_{J^\mathbf{A}} = 0$ if $N_J=0$: Verified using $N_{J^\mathbf{A}}(X^\mathbf{A}, Y^\mathbf{A}) = (N_J(X,Y))^\mathbf{A}$.
		\end{enumerate}
		Since $g^\mathbf{A}$ is compatible with $J^\mathbf{A}$, $\omega^\mathbf{A}(\cdot,\cdot)=g^\mathbf{A}(J^\mathbf{A}\cdot, \cdot)$, $d\omega^\mathbf{A}=0$, and $N_{J^\mathbf{A}}=0$, the structure is Kähler. (Alternatively, $\nabla^\mathbf{A} J^\mathbf{A} = (\nabla J)^\mathbf{A} = 0$).
	\end{proof}
	
	\begin{proof}[\textbf{Proof of Theorem \ref{Theo-Contact} (Contact)}]
		Existence of canonical contact lift $\beta^\mathbf{A}$ for odd $l$ is from \cite{KMS, OkayamaReference}. Let $\xi$ be Reeb for $\beta$. Its canonical lift $\xi^\mathbf{A}$ satisfies $\beta^\mathbf{A}(\xi^\mathbf{A}) = \Tr((\beta(\xi))^\mathbf{A}) = \Tr(1_\mathbf{A}) = 1$ and $\imathOp_{\xi^\mathbf{A}} d\beta^\mathbf{A} = \imathOp_{\xi^\mathbf{A}} (d\beta)^\mathbf{A} = (\imathOp_\xi d\beta)^\mathbf{A} = 0$. By uniqueness, $\xi_{M^\mathbf{A}} = \xi^\mathbf{A}$.
	\end{proof}
	
	\begin{proof}[\textbf{Proof of Theorem \ref{Theo-Orientation} (Orientation)}]
		Follows from $\det(J(\psi_{\alpha\beta}^\mathbf{A})) \approx (\det J(\psi_{\alpha\beta}))^l > 0$ if $\det J(\psi_{\alpha\beta}) > 0$. A canonical orientation form $\Omega^\mathbf{A}$ exists \cite{KMS}.
	\end{proof}
	
	\begin{proof}[\textbf{Proof of Theorem \ref{Theo-Jacobi} (Jacobi)}]
		Let $\Lambda^\mathbf A = \frac{1}{l}\sum (S_j)_* \Lambda$, $\Xi^\mathbf A = \frac{1}{l}\sum (S_j)_* \Xi$. Assumes compatibility rules \cite{KMS} such that $[\Lambda^\mathbf A, \Lambda^\mathbf A]^{SN} = \frac{1}{l} \sum (S_j)_*([\Lambda, \Lambda]^{SN}) = \frac{1}{l} \sum (S_j)_*(2\Xi \wedge \Lambda)$ equals $2\Xi^\mathbf A \wedge \Lambda^\mathbf A$, and $[\Xi^\mathbf A, \Lambda^\mathbf A]^{SN} = \frac{1}{l} \sum (S_j)_*([\Xi, \Lambda]^{SN}) = 0$.
	\end{proof}
	
	\begin{proof}[\textbf{Proof of Theorem \ref{Theo-Sasakian} (Sasakian)}]
		Given $(M, g, \eta, \xi, \Phi)$ Sasakian, $l$ odd. Consider lifts $(g^\mathbf{A}, \eta^\mathbf{A}, \xi^\mathbf{A}, \Phi^\mathbf{A})$ on $M^\mathbf{A}$. $g^\mathbf{A}$ is Riemannian, $(\eta^\mathbf{A}, \xi^\mathbf{A})$ is contact, $\xi^\mathbf{A}$ is Reeb. Verify Sasakian conditions using functoriality and $\nabla^\mathbf{A}_{X^\mathbf{A}} T^\mathbf{A} = (\nabla_X T)^\mathbf{A}$:
		\begin{enumerate}
			\item $\xi^\mathbf{A}$ is Killing for $g^\mathbf{A}$: $\Lie_{\xi^\mathbf{A}} g^\mathbf{A} = (\Lie_\xi g)^\mathbf{A} = 0$.
			\item $g^\mathbf{A}(\xi^\mathbf{A}, \xi^\mathbf{A}) = \Tr((g(\xi, \xi))^\mathbf{A}) = 1$.
			\item $\eta^\mathbf{A}(X^\mathbf{A}) = \Tr((\eta(X))^\mathbf{A}) = \Tr((g(X, \xi))^\mathbf{A}) = g^\mathbf{A}(X^\mathbf{A}, \xi^\mathbf{A})$.
			\item $\Phi^\mathbf{A} X^\mathbf{A} = (\Phi X)^\mathbf{A} = (\nabla_X \xi)^\mathbf{A} = \nabla^\mathbf{A}_{X^\mathbf{A}} \xi^\mathbf{A}$.
			\item $(\Phi^\mathbf{A})^2 (X^\mathbf{A}) = (\Phi^2 X)^\mathbf{A} = (-X + \eta(X) \xi)^\mathbf{A} = -X^\mathbf{A} + \Tr((\eta(X))^\mathbf{A}) \xi^\mathbf{A} = -X^\mathbf{A} + \eta^\mathbf{A}(X^\mathbf{A}) \xi^\mathbf{A}$. Matches $(-Id + \eta^\mathbf{A} \otimes \xi^\mathbf{A})(X^\mathbf{A})$.
			\item $g^\mathbf{A}(\Phi^\mathbf{A} X^\mathbf{A}, \Phi^\mathbf{A} Y^\mathbf{A}) = \Tr((g(\Phi X, \Phi Y))^\mathbf{A}) = \Tr((g(X, Y) - \eta(X) \eta(Y))^\mathbf{A}) = g^\mathbf{A}(X^\mathbf{A}, Y^\mathbf{A}) - \eta^\mathbf{A}(X^\mathbf{A}) \eta^\mathbf{A}(Y^\mathbf{A})$.
			\item $(\nabla^\mathbf{A}_{X^\mathbf{A}} \Phi^\mathbf{A}) Y^\mathbf{A} = ((\nabla_X \Phi)Y)^\mathbf{A} = (g(X, Y) \xi - \eta(Y) X)^\mathbf{A} = g^\mathbf{A}(X^\mathbf{A}, Y^\mathbf{A}) \xi^\mathbf{A} - \eta^\mathbf{A}(Y^\mathbf{A}) X^\mathbf{A}$.
		\end{enumerate}
		All conditions lift correctly, thus $(M^\mathbf{A}, g^\mathbf{A}, \eta^\mathbf{A}, \xi^\mathbf{A}, \Phi^\mathbf{A})$ is Sasakian.
	\end{proof}

	\begin{proof}[\textbf{Proof for Prop. \ref{Prop-SubRiemannian}}]
		Assume $T^\mathbf{A}$ lifts the distribution $\mathcal{D}$ to $\mathcal{D}^\mathbf{A} \subset TM^\mathbf{A}$ (e.g., via $TM^\mathbf{A}\cong(TM)^\mathbf{A}$ or local generators $X_i^\mathbf{A}$). $(g_{\mathcal{D}})^\mathbf{A}$ exists via functoriality. $(g_{\mathcal{D}})^\mathbf{A}$ defines a metric on $\mathcal{D}^\mathbf{A}$ since $g_{\mathcal{D}}$ is positive-definite on $\mathcal{D}$. Bracket-generation follows from $[X^\mathbf{A}, Y^\mathbf{A}]=[X,Y]^\mathbf{A}$ and injectivity of the lift map $X \mapsto X^\mathbf{A}$ applied to iterated brackets spanning $TM$. Rigging fields $Z_j$ lift to $Z_j^\mathbf{A}$ forming a complement to $\mathcal{D}^\mathbf{A}$.
	\end{proof}
	
	\begin{proof}[\textbf{Proof for Prop. \ref{Prop-Walker}}]
			Lift $g$ to $g^\mathbf{A}$ (pseudo-Riemannian) and $\mathcal{D}$ to $\mathcal{D}^\mathbf{A}$ (distribution).
			1. Nullity: $g^\mathbf{A}(Y_1^\mathbf{A}, Y_2^\mathbf{A}) = \Tr((g(Y_1, Y_2))^\mathbf{A}) = 0$ for $Y_1, Y_2 \in \Gamma(\mathcal{D})$.
			2. Parallelism: $\nabla^\mathbf{A}_{U^\mathbf{A}} V^\mathbf{A} = (\nabla_U V)^\mathbf{A}$. Since $W = \nabla_U V \in \Gamma(\mathcal{D})$, then $W^\mathbf{A} \in \Gamma(\mathcal{D}^\mathbf{A})$. Assuming extension to arbitrary $X \in \mathfrak{X}(M^\mathbf{A})$, $\mathcal{D}^\mathbf{A}$ is parallel for $\nabla^\mathbf{A}$.
		\end{proof}
		\begin{proposition}\label{Prop-Lagrangian-Proof} 
			Let $(M^{2n}, \omega)$ be a symplectic manifold and $L^n \subset M$ be a Lagrangian submanifold. Then the canonical lift $L^\mathbf{A} \subset M^\mathbf{A}$ is a Lagrangian submanifold of $(M^\mathbf{A}, \omega^\mathbf{A})$.
		\end{proposition}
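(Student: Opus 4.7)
The plan is to verify the three defining properties of a Lagrangian submanifold directly for $L^\mathbf{A} \subset M^\mathbf{A}$: that $L^\mathbf{A}$ is an embedded submanifold, that it is middle-dimensional in $M^\mathbf{A}$, and that $\omega^\mathbf{A}$ pulls back to zero under the inclusion. Essentially every step reduces to an application of the functoriality of the Weil lift recalled in Section \ref{sec:canonical_lifts}.

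First I would fix $\iota : L \hookrightarrow M$ and invoke the fact that $T^\mathbf{A}$ preserves embeddings: since locally $\iota$ is a slice inclusion $\mathbb{R}^n \hookrightarrow \mathbb{R}^{2n}$, its Weil prolongation is the slice inclusion $\mathbb{R}^n \otimes \mathbf{A} \hookrightarrow \mathbb{R}^{2n} \otimes \mathbf{A}$ in adapted charts, so $\iota^\mathbf{A}: L^\mathbf{A} \hookrightarrow M^\mathbf{A}$ is again a smooth embedding. The dimension count is then immediate from $\dim X^\mathbf{A} = \dim X \cdot l$: we have $\dim L^\mathbf{A} = nl = \tfrac{1}{2} \dim M^\mathbf{A}$, so $L^\mathbf{A}$ is middle-dimensional in $M^\mathbf{A}$.

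Next I would verify isotropy using the naturality of the canonical form lift, $(f^\mathbf{A})^*(\theta^\mathbf{A}) = (f^*\theta)^\mathbf{A}$, which is part of the package of functorial properties of $T^\mathbf{A}$. Applied to $\iota$ and $\omega$, this yields
\[
(\iota^\mathbf{A})^*(\omega^\mathbf{A}) \;=\; (\iota^*\omega)^\mathbf{A} \;=\; 0^\mathbf{A} \;=\; 0,
\]
the second equality because $L$ is Lagrangian and the third because the lift operation is $\mathbb{R}$-linear. Combined with the dimension count, this shows $L^\mathbf{A}$ is Lagrangian in $(M^\mathbf{A}, \omega^\mathbf{A})$.

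The only genuine subtlety, and the point I would be most careful about, is that $\omega^\mathbf{A}$ in the statement is the real-valued non-degenerate lift (obtained via the trace map $\Tr$), not the $\mathbf{A}$-valued version. Since the passage from the $\mathbf{A}$-valued lift to the real lift is fiberwise $\mathbb{R}$-linear composition with $\Tr$, and pullback commutes with $\mathbb{R}$-linear postcomposition, naturality descends: $\omega^\mathbf{A}_{\mathrm{real}}(U, V) = \Tr\bigl((\iota^*\omega)(U',V')^\mathbf{A}\bigr)$ whenever $U,V \in T L^\mathbf{A}$ correspond under $TL^\mathbf{A} \cong (TL)^\mathbf{A} \subset (TM)^\mathbf{A} \cong TM^\mathbf{A}$ to elements $U',V' \in (TL)^\mathbf{A}$, and this vanishes because $\iota^*\omega = 0$. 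Once this is in place, the proof is essentially formal and does not require any separate analysis of the fiber or of explicit coordinates.
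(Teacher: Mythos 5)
Your proof is correct, and it reaches the same two checkpoints as the paper's proof (the dimension count $\dim L^\mathbf{A} = nl = \tfrac12 \dim M^\mathbf{A}$ is identical), but the isotropy step is carried out by a different mechanism. You invoke the global naturality identity $(\imath^\mathbf{A})^*(\omega^\mathbf{A}) = (\imath^*\omega)^\mathbf{A}$ for the inclusion $\imath: L \hookrightarrow M$, which kills $\omega^\mathbf{A}|_{TL^\mathbf{A}}$ in one line since $\imath^*\omega = 0$. The paper instead works pointwise: it represents arbitrary tangent vectors $U, V \in T_\zeta L^\mathbf{A}$ as values $X^\mathbf{A}|_\zeta, Y^\mathbf{A}|_\zeta$ of canonical lifts of vector fields tangent to $L$, and then computes $\omega^\mathbf{A}(X^\mathbf{A}, Y^\mathbf{A}) = \Tr\bigl((\omega(X,Y))^\mathbf{A}\bigr) = 0$ because the function $\omega(X,Y)$ vanishes identically on $L$. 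Your route buys economy and sidesteps a point the paper passes over quickly, namely that $T_\zeta L^\mathbf{A}$ really is spanned by values of flow prolongations $X^\mathbf{A}|_\zeta$ with $X$ tangent to $L$; on the other hand, it shifts the burden onto the naturality identity for the \emph{real-valued} (trace-composed) lift, which you correctly flag and justify as the one genuine subtlety --- the paper's evaluation formula makes that same compatibility explicit rather than implicit. You also supply the local-slice argument that $\imath^\mathbf{A}$ is an embedding, which the paper asserts without comment; that is a worthwhile addition. Both arguments ultimately rest on the same functoriality package from Section \ref{sec:canonical_lifts}, so neither is more general than the other.
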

		
		\begin{proof}
			First, $M^\mathbf{A}$ is a smooth manifold of dimension $2n \cdot l$, where $l = \dim \mathbf{A}$. Since $L \subset M$ is a submanifold of dimension $n$, its canonical Weil lift $L^\mathbf{A} = T^\mathbf{A}(L)$ is a submanifold of $M^\mathbf{A}$ of dimension $n \cdot l$. This satisfies the dimension requirement $\dim L^\mathbf{A} = \frac{1}{2} \dim M^\mathbf{A}$.
			
			Second, we need to show that the canonical symplectic form $\omega^\mathbf{A}$ vanishes on the tangent bundle $TL^\mathbf{A}$. Let $\zeta \in L^\mathbf{A}$ and let $U, V \in T_\zeta L^\mathbf{A}$. Since $L^\mathbf{A}$ is the functorial lift of $L$, the tangent space $T_\zeta L^\mathbf{A}$ is spanned by canonical lifts of vector fields tangent to $L$. That is, we can represent $U = X^\mathbf{A}|_\zeta$ and $V = Y^\mathbf{A}|_\zeta$ for some vector fields $X, Y$ defined locally on $M$ and tangent to $L$ along $L \cap (\text{domain}(X,Y))$.
			
			Using the property of the canonical lift $\omega^\mathbf{A}$ and the trace map interpretation (or the general property $(\imathOp_X\imathOp_Y \omega)^\mathbf{A} = \imathOp_{X^\mathbf{A}}\imathOp_{Y^\mathbf{A}}\omega^\mathbf{A}$):
			$$ \omega^\mathbf{A}(U, V) = \omega^\mathbf{A}(X^\mathbf{A}|_\zeta, Y^\mathbf{A}|_\zeta) = \Tr( (\omega(X, Y))^\mathbf{A} ) |_\zeta.$$
			Since $X$ and $Y$ are tangent to the Lagrangian submanifold $L$, the symplectic form evaluated on them vanishes: $\omega(X(x), Y(x)) = 0$ for all $x \in L$. Thus, the function $\omega(X, Y)$ is identically zero on $L$.
			Consequently, its canonical $\mathbf{A}$-valued lift $(\omega(X, Y))^\mathbf{A}$ is also zero when restricted to $L^\mathbf{A}$. Applying the trace, we get:
			$$ \omega^\mathbf{A}(U, V) = \Tr(0)|_\zeta = 0.$$
			Since this holds for arbitrary tangent vectors $U, V \in T_\zeta L^\mathbf{A}$ and any $\zeta \in L^\mathbf{A}$, we conclude that $\omega^\mathbf{A}|_{TL^\mathbf{A}} \equiv 0$.
			Therefore, $L^\mathbf{A}$ is a Lagrangian submanifold of $(M^\mathbf{A}, \omega^\mathbf{A})$.
		\end{proof}
		\section{Remarks on Non-preserved Structures} \label{sec:remarks}
		
		While many structures lift canonically, some important properties, especially those involving curvature balances like the Einstein or Calabi-Yau conditions, are generally not preserved due to the complexity of lifted curvature and non-compactness.
		
		\begin{remark}[Calabi-Yau Structures]
			Let $(M, g, \omega, J)$ be Calabi-Yau (compact, Kähler, $c_1(M)=0$, Ricci-flat). $(M^\mathbf{A}, g^\mathbf{A}, \omega^\mathbf{A}, J^\mathbf{A})$ is Kähler. For $\dim \mathbf{A} > 1$, $M^\mathbf{A}$ is not compact. While $c_1(M^\mathbf{A})=0$ and existence of a holomorphic volume form likely lift, $g^\mathbf{A}$ is typically not Ricci-flat \cite{Mo, KMS}. Thus, $(M^\mathbf{A}, g^\mathbf{A})$ is generally not Calabi-Yau in the standard metric sense.
		\end{remark}
		
		\begin{remark}[Einstein Structures]
			Let $(M, g)$ be Einstein ($Ricci(g) = \lambda g$). The lifted metric $g^\mathbf{A}$ on $M^\mathbf{A}$ is generally not Einstein. The relationship between $Ricci(g^\mathbf{A})$ and $Ricci(g)$ is complex \cite{Mo, KMS}, and the condition $Ricci(g^\mathbf{A}) = \lambda' g^\mathbf{A}$ is unlikely to hold unless $M$ is flat.
		\end{remark}
		
		\section{Examples} \label{sec:examples}
		We use standard coordinates $(x_{i,k}, y_{i,k}, z_k)$ on $(\mathbb{R}^n)^\mathbf{A}$ corresponding to a basis $\{a_k\}$ of $\mathbf{A}$ with $a_1=1$. Canonical lifts here often correspond to using the trace $\Tr(a_1)=1, \Tr(a_k)=0$ for $k>1$.
		
		\begin{example}[Symplectic Structure on $(\mathbb R^{2n})^\mathbf A$]
			$M = \mathbb R^{2n}$, $\omega_0 = \sum_{i=1}^n dx_i \wedge dy_i$. A common canonical lift is:
			$$\omega^\mathbf A = \sum_{k=1}^l \left( \sum_{i=1}^n dx_{i,k} \wedge dy_{i,k} \right).$$
		\end{example}
		
		\begin{example}[Cosymplectic Structure on $(\mathbb R^{2n + 1})^\mathbf A$, $l$ odd] \label{ex:cosymplectic-non-suspension}
			$M = \mathbb R^{2n+1}$, $\omega_0 = \sum_{i=1}^n dx_i \wedge dy_i$, $\eta_0 = dz$. Assume $l$ is odd. A possible canonical lift is:
			$$ \omega^\mathbf A = \sum_{k=1}^l \left( \sum_{i=1}^n dx_{i,k} \wedge dy_{i,k}\right), \quad \eta^\mathbf A = dz_1 .$$
			This is cosymplectic with Reeb field $\partial/\partial z_1$. Since $M^\mathbf{A} \cong (\mathbb{R}^{2n})^\mathbf{A} \times \mathbb{R}^\mathbf{A} \cong (\mathbb{R}^{2n})^\mathbf{A} \times \mathbb{R}^l$, this is not a suspension $P' \times \mathbb{R}$ if $l>1$.
		\end{example}
		
		\begin{example}[Contact Structure on $(\mathbb R^{2n + 1})^\mathbf A$, $l$ odd] \label{ex:contact-revised}
			$M = \mathbb R^{2n+1}$, $\beta_0 = dz + \sum_{i=1}^n x_i dy_i$. The canonical contact lift $\beta^\mathbf{A}$ exists \cite{OkayamaReference, KMS}. The Reeb field is $\xi_{M^\mathbf{A}} = (\partial/\partial z)^\mathbf{A}$, whose coordinate expression is complex \cite{KMS}.
		\end{example}
		
		\begin{example}[Orientation Form on $(\mathbb R^{n})^\mathbf A$]
			$M = \mathbb R^{n}$, $\Omega_0 = dx_1 \wedge \dots \wedge dx_n$. A canonical orientation form is $\Omega^\mathbf A = \bigwedge_{k=1}^l ( \bigwedge_{i=1}^n dx_{i,k} )$.
		\end{example}
		
		\begin{example}[Riemannian Metric on $(\mathbb R^{n})^\mathbf A$]
			$M = \mathbb{R}^n$, $g_0 = \sum_{i=1}^n dx_i \otimes dx_i$. A canonical flat lift is $g^\mathbf{A} = \sum_{k=1}^l ( \sum_{i=1}^n dx_{i,k} \otimes dx_{i,k} )$.
		\end{example}
		
		\section{Concluding Remarks and Future Directions} \label{sec:conclusion}
		
		This paper demonstrated that a wide range of fundamental geometric structures on a smooth manifold $M$ admit canonical lifts to its Weil bundle $M^\mathbf{A}$, preserving the essential defining properties. This highlights the Weil functor $T^\mathbf{A}$ as a powerful tool for naturally extending geometry. The preservation extends to intricate structures like Sasakian geometry, and even non-holonomic structures like sub-Riemannian and Walker geometries. However, curvature conditions like Einstein or Ricci-flatness are generally not preserved.
		
		Several avenues for deeper research emerge. Firstly, the symplectic topology of $(M^\mathbf{A}, \omega^\mathbf{A})$ warrants investigation. Defining Floer homology for non-compact $M^\mathbf{A}$ (when $M$ is compact) and relating it to $HF(M)$ is a challenging direction. Secondly, exploring the lift of structures from generalized geometry to $M^\mathbf{A}$ could unify some results. Finally, a detailed analysis of the curvature $R^\mathbf{A}$ of $g^\mathbf{A}$, its relation to $R$, and the behavior of geometric flows under the Weil functor remain largely unexplored. These directions promise further insights into the interplay between the algebra $\mathbf{A}$ and the geometry of $M$ and $M^\mathbf{A}$.
		
		\appendix
		\section{Appendix: Definitions of Geometric Structures} \label{app:definitions}
		Let $M$ be a smooth manifold.
		
		\begin{enumerate}
			\item \textbf{Almost Complex Structure:} A $(1,1)$-tensor field $J$ such that $J^2 = -Id$. Integrable if its Nijenhuis tensor $N_J(X,Y) = [JX, JY] - J[JX, Y] - J[X, JY] + [X,Y]$ vanishes.
			\item \textbf{Contact:} $M^{2n+1}$ with a 1-form $\beta$ s.t. $\beta \wedge (d\beta)^n \neq 0$. Reeb field $\xi$: $\beta(\xi)=1, \imathOp_\xi d\beta = 0$.
			\item \textbf{Cosymplectic:} $M^{2n+1}$ with closed $\omega \in \Omega^2(M), \eta \in \Omega^1(M)$ s.t. $\eta \wedge \omega^n \neq 0$. Reeb field $\xi$: $\eta(\xi)=1, \imathOp_\xi \omega = 0$.
			\item \textbf{Einstein:} Riemannian $(M,g)$ s.t. $Ricci(g) = \lambda g$ for $\lambda \in \mathbb{R}$.
			\item \textbf{Jacobi Structure:} $M$ with bivector $\Lambda$, vector field $\Xi$ s.t. $[\Lambda, \Lambda]^{SN} = 2\Xi \wedge \Lambda$ and $[\Xi, \Lambda]^{SN} = \Lie_\Xi \Lambda = 0$.
			\item \textbf{K\"ahler:} Riemannian $(M, g)$ with compatible integrable almost complex structure $J$ s.t. $\nabla J = 0$. Equivalently, associated form $\omega(X,Y) = g(JX, Y)$ is closed.
			\item \textbf{Calabi-Yau:} Compact Kähler manifold with $c_1(M)=0$. Admits a Ricci-flat Kähler metric in each Kähler class.
			\item \textbf{Levi-Civita Connection:} Unique torsion-free connection $\nabla$ on $(M,g)$ s.t. $\nabla g = 0$.
			\item \textbf{Locally Conformal Symplectic (lcs):} $M^{2n}$ with non-degenerate $\omega \in \Omega^2$, closed $\theta \in \Omega^1$ (Lee form) s.t. $d\omega = -\theta \wedge \omega$.
			\item \textbf{Locally Conformal Cosymplectic (lcc):} $M^{2n+1}$ with $\omega \in \Omega^2, \eta \in \Omega^1$ s.t. $\eta \wedge \omega^n \neq 0$, and closed $\theta \in \Omega^1$ s.t. $d\omega = -2\theta \wedge \omega, d\eta = -\theta \wedge \eta$.
			\item \textbf{Oriented Manifold:} Admits atlas with positive Jacobian transitions, or a global volume form.
			\item \textbf{Riemannian:} $M$ with metric $g$ (positive-definite symmetric $(0,2)$-tensor). Pseudo-Riemannian if $g$ is non-degenerate but indefinite.
			\item \textbf{Sasakian Manifold:} Riemannian $(M^{2n+1}, g)$ with unit Killing field $\xi$, $\eta=g(\cdot, \xi)$, $\Phi X = \nabla_X \xi$, satisfying $\Phi^2 = -Id + \eta \otimes \xi$ and $g(\Phi X, \Phi Y) = g(X, Y) - \eta(X) \eta(Y)$. Note that $(\eta, \xi)$ is contact.
			\item \textbf{Sub-Riemannian:} $(M, \mathcal{D}, g_{\mathcal{D}})$ where $\mathcal{D} \subset TM$ is a distribution (subbundle) and $g_{\mathcal{D}}$ is a metric on $\mathcal{D}$. Usually assume $\mathcal{D}$ is bracket-generating.
			\item \textbf{Symplectic:} $M^{2n}$ with closed, non-degenerate $\omega \in \Omega^2$.
			\item \textbf{Walker Manifold:} Pseudo-Riemannian $(M,g)$ admitting a parallel, null distribution $\mathcal{D} \subset TM$. ($\nabla_X Y \in \Gamma(\mathcal{D})$ for $Y \in \Gamma(\mathcal{D})$, and $g(Y_1, Y_2)=0$ for $Y_1, Y_2 \in \Gamma(\mathcal{D})$).
		\end{enumerate}

		\begin{center}
			{\bf Acknowledgments}
		\end{center}
		The authors are thankful to Prof. Aissa Wade for helpful comments and advice.


\begin{thebibliography}{99}
			\bibitem{A-F-J}
		M. Bouesso et al, \emph{ Symplectic structure and applications on Weil bundles,} Afr. Mat. \textbf{35} (2024). 
			\bibitem{B-O} 
			B. G. R. Bossoto, E. Okassa, \emph{Champs de vecteurs et formes differentielles sur une variété des points proches}. Arch. Math. (Brno) \textbf{44}(2) (2008), 159--171.
			\bibitem{G-K}
			D. R. Grigore, D. Krupka, \emph{Invariants of velocities and higher order Grassmann bundles}. J. Geom. Phys. \textbf{24}(3) (1998), 244--264. 
			\bibitem{H-L}
			H. Li, \emph{Topology of cosymplectic/cokaehler manifolds}, Asian J. Math. \textbf{12}(4) (2008), 527--544.
			\bibitem{K-1}
			I. Kolář, \emph{Weil bundles as generalized jet spaces}, In: Handbook of Global Analysis, Elsevier, 2008, pp. 625--664.
			\bibitem{KMS} 
			I. Kolář, P. W. Michor, J. Slovák, \emph{Natural operations in differential geometry}, Springer-Verlag, Berlin, 1993. 
			\bibitem{P.L} 
			P. Libermann, \emph{Sur les automorphismes infinitésimaux des structures symplectiques et des structures de contact}, Colloque Géom. Diff. Globale (Bruxelles, 1958), Centre Belge Rech. Math., Louvain, 1959, pp. 37--59.
			\bibitem{Mo}
			A. Morimoto, \emph{Prolongations of connections to bundles of infinitely near points}, J. Diff. Geom. \textbf{11}(4) (1976), 479--498.
			\bibitem{Ok}
			E. Okassa, \emph{Prolongement des champs de vecteurs à des variétés des points proches}. Ann. Fac. Sci. Toulouse Math. (6) \textbf{3}(3) (1994), 347--366.
			\bibitem{OkayamaReference} 
			T. Okayama, \emph{Contact structures on Weil bundles}, Tohoku Math. J. (2) \textbf{65}(3) (2013), 395--411.
			\bibitem{TS}
			S. Tchuiaga, \emph{Towards the cosymplectic topology}, Complex Manifolds \textbf{9}(1) (2022), 230--246. 
			\bibitem{T-H-B}
			S. Tchuiaga, F. Houenou, P. Bikorimana, \emph{On Cosymplectic Dynamics I}, Complex Manifolds \textbf{9}(1) (2022), 114--137. 
			\bibitem{Wei}
			A. Weil, \emph{Théorie des points proches sur les variétés différentiables}, Colloque de Topologie et Géométrie Différentielle, Strasbourg, 1953, C.N.R.S., Paris, 1953, pp. 111--117.
		\end{thebibliography}
	\end{document}